\documentclass[11pt,leqno]{amsart}
\usepackage{amsmath}
\usepackage{eucal}
\usepackage{amsfonts}
\usepackage{amssymb}
\usepackage{geometry}
\usepackage{graphicx}
\usepackage{hyperref}
\setcounter{MaxMatrixCols}{30}

\newtheorem{theorem}{Theorem}

\newtheorem{corollary}[theorem]{Corollary}
\newtheorem{definition}[theorem]{Definition}

\newtheorem{lemma}[theorem]{Lemma}

\newtheorem{proposition}[theorem]{Proposition}
\newtheorem{remark}[theorem]{Remark}

\newtheorem*{theorem*}{Theorem}
\newtheorem*{proposition*}{Proposition}
\newtheorem*{corollary*}{Corollary}

\newtheorem{theoremA}{Theorem}
\newtheorem{corollaryA}[theoremA]{Corollary}

\newcommand{\R}{\mathbb{R}}
\newcommand{\Ric}{\mathrm{Ric}}
\newcommand{\Hess}{\mathrm{Hess}}
\renewcommand{\div}{\mathrm{div}}
\newcommand{\CN}{\mathcal{N}}
\newcommand{\CO}{\mathcal{O}}
\newcommand{\vol}{\mathrm{vol}}
\newcommand{\g}{\langle \cdot, \cdot \rangle}


\def\dd{\mathrm{d}}

\def\<{\langle}
\def\>{\rangle}
\def\bea{\begin{eqnarray*}}
\def\eea{\end{eqnarray*}}
\def\be{\begin{equation}}
\def\ee{\end{equation}}

\def\qed{\ifhmode\unskip\nobreak\fi\ifmmode\ifinner\else
\hskip5 pt \fi\fi\hbox{\hskip5 pt \vrule width4 pt height6 pt
depth1.5 pt \hskip 1pt }}

\begin{document}

\title{Height estimates for Killing graphs}
\author{Debora Impera}
\address{Dipartimento di Matematica e Applicazioni\\
Universit\`a di Milano--Bicocca Via Cozzi 53\\
I-20125 Milano, ITALY}
\email{debora.impera@gmail.com}
\author{Jorge H. de Lira}
\address{Departamento de Matem\'atica\\Universidade Federal do Cear\'a-UFC\\60455-760 Fortaleza, CE, Brazil.}
\email{jorge.lira@mat.ufc.br}
\author{Stefano Pigola}
\address{Sezione di Matematica - DiSAT\\
Universit\'a dell'Insubria - Como\\
via Valleggio 11\\
I-22100 Como, Italy}
\email{stefano.pigola@uninsubria.it}
\author{Alberto G. Setti}
\address{Sezione di Matematica - DiSAT\\
Universit\'a dell'Insubria - Como\\
via Valleggio 11\\
I-22100 Como, Italy}
\email{alberto.setti@uninsubria.it}
\date{\today}
\maketitle

\begin{abstract}
The paper aims at proving global height estimates for Killing graphs defined over a complete manifold with nonempty boundary. To this end, we first point out how the geometric analysis on a Killing graph is naturally related to a weighted manifold structure, where the weight is defined in terms of the length of the Killing vector field. According to this viewpoint, we introduce some potential theory on weighted manifolds with boundary and we prove a weighted volume estimate for intrinsic balls on the Killing graph. Finally, using these tools, we provide the desired estimate for the weighted height function in the assumption that the Killing graph has constant weighted mean curvature and the weighted geometry of the ambient space is suitably controlled.
\end{abstract}


\section*{Introduction and main results}

Let $\left(M, \g_M\right)  $ be a complete, $(n+1) $-dimensional
Riemannian manifold endowed with a complete Killing vector field $Y$
whose orthogonal distribution has constant rank $n$ and it is integrable. Let
$(P,  \g_P)$ be an integral leaf of that distribution
equipped with its induced complete Riemannian metric $\g_P$. The flow $\vartheta: P \times \R \rightarrow M$ generated by $Y$ is an isometry between $M$ and the warped product $P\times_{e^{-\psi}}\R $ with metric
\[
\g_{M}= \g_P+e^{-2\psi}\dd s \otimes \dd s
\]
where $s$ is the flow parameter and $\psi = - \log |Y|$.

Let $\Omega\subset P$ be a possibly unbounded domain with regular boundary
$\partial\Omega\neq\emptyset$.
The {\it Killing graph} of a smooth function $u:\bar{\Omega}\rightarrow
\R $ is the hypersurface $\Sigma\subset M$ parametrized by the map
\[
X(x)=\vartheta(x,u(x)), \quad x\in \bar\Omega.
\]
Obviously, if $Y$ is a parallel vector field, then
$M$ is isometric with the Riemannian product $P\times\R $ and the notion
of a Killing graph reduces to that of a usual vertical graph.\smallskip

The above terminology, together with some existence results, was first introduced by M. Dajczer,  P.A. Hinojosa, and J.H. de Lira in \cite{DHL-CalcVar}. Since then, Killing graphs have become the subject of a systematic investigation both in order to understand their geometry and as a tool to study different problems such as the existence of solutions of the asymptotic Plateau problem in certain symmetric spaces; \cite{CR-Asian, Ri-preprint}.\medskip

The aim of this paper is to obtain quantitative height estimates for a smooth
Killing graph
\[
\Sigma=\mathrm{Graph}_{\bar \Omega}\left(  u\right)  \hookrightarrow M=P\times_{e^{-\psi}}\R
\]
parametrized over (the closure of) a possibly unbounded domain $\Omega\subset P$ and whose
smooth boundary $\partial\Sigma\neq\emptyset$ is contained in the totally
geodesic slice $P\times\{0\}$ of $M$.\medskip

When $\psi (x)  \equiv \mathrm{const}$ and the ambient manifold is
the Riemannian product $P\times\R $, it is well understood that
quantitative a-priori estimates can be deduced by assuming that the mean
curvature $H$ of the graph is constant (\textit{CMC graphs} for short). For
bounded domains into the Euclidean plane $P=\R ^{2}$ this was first
observed in seminal papers by E. Heinze, \cite{He}, and J. Serrin, \cite{Se}. More precisely, assume
that $H>0$ with respect to the\textit{ downward pointing Gauss map} $\CN$. Then, $\Sigma$ is confined into the slab $\R ^{2}\times
\lbrack0,1/H]$ regardless of the size of the domain $\Omega$. This type of
estimates has been recently extended to unbounded domains $\Omega
\subset\R ^{2}$ by A. Ros and H. Rosenberg, \cite{RoRo-AJM}. Their
technique, which is based on smooth convergence of CMC surfaces, requires
strongly that the base leaf $P=\R ^{2}$ is homogeneous and cannot be
trivially adapted to general manifolds. In the case of a generic base manifold
$P$, and maintaining the assumption that the CMC vertical graph $\Sigma$ is
parametrized over a bounded domain, the corresponding height estimates have
been obtained by D. Hoffman, J.H. de Lira and H. Rosenberg, \cite{HLR-TAMS}, J.A.
Aledo, J.M. Espinar and J.A. Galvez, \cite{AEG-Illinois}, L. Al\'\i as and M.
Dajczer, \cite{AD-PEMS} etc. The geometry of $P$ enters the game in the form
of curvature conditions, namely, the Ricci curvature of \ $P$ cannot be too
much negative when compared with $H$. In particular, for non-negatively Ricci
curved bases the height estimates hold with respect to any choice of $H>0$. In
the very recent \cite{IPS-Crelle}, the boundedness assumption on the domain
$\Omega$ has been replaced by a quadratic volume growth condition, thus
obtaining a complete extension of the Ros-Rosenberg result to any complete
manifold $P$ with non-negative sectional curvature and dimensions $n\leq4$.
The restriction on the dimension is due to the fact that, up to now, it is not
known whether CMC graphs over non-negatively curved manifolds $P$ are
necessarily contained in a vertical slab. Granted this, the desired estimates
can be obtained.\medskip

In a slightly different perspective, qualitative bounds of the height of CMC
vertical graphs on bounded domains have been obtained by J. Spruck,
\cite{S-PAMQ}. It is worth to observe that his technique, based on Serrin-type
gradient estimates and Harnack inequalities, is robust enough to give a-priori
bounds even in the case where the mean curvature is non-constant. Actually, it works
even for Killing graphs up to using the work by M. Dajczer and J.H. de Lira,
\cite{DL-Poincare}; see also \cite{DLR-JAM}. However, in the Killing setting, the problem of obtaining
quantitative bounds both on bounded and\ on unbounded domains remained
open.\medskip

Due to the structure of the ambient space, it is reasonable to expect that
an a-priori estimate for the height function of a CMC Killing graph is
sensitive of the deformation function ${\psi}$. In fact, since the length
element of the fibre $\left\{  x\right\}  \times\R $ is weighted by the
factor $e^{-\psi(x)}$, a reasonable pointwise bound should
be of the form
\[
0 \leq e^{-\psi(x)}u(x)  \lesssim \frac{1}{|H|}.
\]
Actually, the same weight $e^{-\psi(x)}$ appears also in the expression of the volume element of $\Sigma$ thus suggesting the existence of an intriguing interplay between Killing graph and smooth metric measure spaces (also called {\it weighted manifolds}). Since this interplay represents the leading idea of the entire paper we are going to take a closer look at how it arises.\medskip

In view of the fact that we are considering graphical hypersurfaces, weighted structures should appear both at the level of the base manifold $P$, where $\Sigma$ is parametrised, and at the level of the ambient space $M$, where $\Sigma$ is realised. In fact, these two weighted contexts will interact in the formulation of the main result.
To begin with, we note that the induced metric on $\Sigma = \mathrm{Graph}_{\bar \Omega}(u)$ is given by
\begin{equation}
\g_{\Sigma} = \g_P+e^{-2\psi}\dd u\otimes \dd u.\label{1st}
\end{equation}
Thus, the corresponding Riemannian volume element $\dd \Sigma$  has the expression
\begin{equation}\label{volumelement}
\mathrm{d}\Sigma= W e^{-\psi}\dd P,
\end{equation}
where $W= \sqrt{e^{2\psi} +|\nabla^Pu|^2}$ and $\dd P$ is the volume element of $P$.
As alluded to above, the special form of \eqref{volumelement}, when compared with the case of a product ambient space, suggests to switch the viewpoint from that of the Riemannian manifold $(P,\g_{P})$ to that of the smooth metric measure space
\[
P_{\psi}:= (P,\g_{P},\dd P_{\psi})
\]
where we are using the standard notation
\[
\dd P_{\psi} = e^{-\psi} \dd P.
\]
In particular,
\[
\dd \Sigma = W \dd P_{\psi}
\]
and we are naturally led to investigate to what extent the geometry of $\Sigma$ is influenced by the geometry of the weighted space $P_{\psi}$. As we shall see momentarily, the geometry of $P_{\psi}$ will enter the game in the form of a growth condition on the weighted volume of its geodesic balls $B^{P}_{R}(o)$:
\[
\vol_{\psi} (B^{P}_{R}(o))  = \int_{B^{P}_{R}(o)} \dd P_{\psi}.
\]
In a different direction, we observe that the smooth metric measure space structure of $P_{\psi}$  extends to the whole ambient space up to  identifying $\psi : P \to \R$ with the function $\bar \psi : P \times_{e^{-\psi}} \R \to \R$ given by
\[
\bar \psi (x,s) = \psi (x).
\]
With a slight abuse of notation, we write
\begin{equation}
 M_{\psi} := (M, \g_{M}, \dd_{\psi}M) = (P \times_{e^{-\psi}} \R , e^{-\psi} \dd M)
\end{equation}
and we can consider the original Killing graph as an  hypersurface
\[
\Sigma = \mathrm{Graph}_{\bar \Omega}(u) \hookrightarrow M_{\psi}.
\]
Previous works on classical height estimates for CMC graphs show that the relevant geometry of the ambient space is subsumed to a condition on its Ricci tensor. Thus, if we think of realizing $\Sigma$ inside $M_{\psi}$ we can expect that height estimates need a condition on its Bakry-\'Emery Ricci tensor defined by
\[
\Ric^{M}_{\psi} = \Ric^{M} + \Hess^{M}(\psi).
\]
We shall come back to this later on.
Following \cite{DHL-CalcVar}, we now orient
$\Sigma$ using the {\it upward pointing} unit normal
\begin{equation} \label{N}
\CN = \frac{e^{2\psi} Y - \vartheta_* \nabla^P u}{\sqrt{e^{2\psi}+|\nabla^P u|^2}}=\frac{1}{W}\big(e^{2\psi} Y - \vartheta_* \nabla^P u\big).
\end{equation}
Note that $\CN$ is upward pointing in the sense that
\begin{equation}\label{NY}
\langle \CN, Y\rangle_{M}=\frac{1}{W} >0.
\end{equation}
Let $H:\Omega\subseteq P \to \R $ be the corresponding mean curvature function.
The weighted $n$-volume
associated to (the restriction of) $\psi$ (to $\Sigma$) is defined by
\begin{equation}
\label{Apsi}
\mathcal{A}_{\psi}[\Sigma] := \int_\Sigma \dd \Sigma_{\psi}.
\end{equation}
We are not concerned with the convergence of the integral.
Given a compactly supported variational vector field $Z$ along $\Sigma$ the first variation formula reads
\begin{equation}
\delta_Z \mathcal{A}_{\psi} = \int_\Sigma \left(\div ^\Sigma Z -
\langle  {\nabla}^{M} \psi, Z\rangle_{M}\right) \dd \Sigma_{\psi}.
\end{equation}
In particular, if $Z = v \CN$ for some $v\in C_{c}^\infty(\Sigma)$ we have
\begin{equation}
\delta_Z \mathcal{A}_{\psi} = \int_\Sigma \left(-nH - \langle  \nabla^{M} \psi, \CN\rangle_{M}\right) v \, \dd \Sigma_{\psi}= -n\int_\Sigma H_{\psi} v \, \dd \Sigma_\psi,
\end{equation}
where, using the definition  proposed  by M. Gromov, \cite{Gro},
\begin{equation}
\label{Hpsi}
H_{\psi} = H + \frac1n \langle  {\nabla}^{M} \psi, \CN\rangle_{M}
\end{equation}
is the {\it $\psi$-weighted mean curvature} of $\Sigma$.\smallskip

The way we have followed to introduce the weighted structure on the ambient space $M$ may look the most natural: it is trivially compatible with the weighted structure of the base space $P_{\psi}$ and with the weighted height function of $\Sigma$. Moreover, the weight $\psi$  appears in the volume element of $\Sigma$. However, it is worth to note that this is not the only ``natural'' choice. This becomes clear as soon as we express the mean curvature (and its modified version) of $\Sigma = \mathrm{Graph}_{\Omega}(u)$ in the classical form of a capillarity equation. Indeed, it is  shown in \cite{ DL-Poincare, DHL-CalcVar}  that
\begin{equation}\label{capillary-2}
\div ^P\Big(\frac{\nabla^P u}{W}\Big) =n H_{-\psi}.
\end{equation}
which, in view of \eqref{Hpsi}, is completely equivalent to
\begin{equation}\label{capillary-3}
\div _{\psi}^P\left(\frac{\nabla^P u}{W} \right)= nH.
\end{equation}
Here, we are using the standard notation for the {\it weighted divergence} in $P_{\psi}$:
\[
\div^{P}_{\psi}X \, = e^{\psi} \div (e^{-\psi}  X) = \div^{P} X - \langle X, \nabla \psi\rangle_{P}.
\]
Thus, with respect to the capillarity type equation \eqref{capillary-2}, the natural and relevant weighted structure on $M$ arises from the weight $-\psi$ and we might be led to consider $\Sigma$ as an hypersurface in $M_{-\psi}$. On the other hand, the original choice $\psi$ fits very well into the  capillarity type equation \eqref{capillary-3}. Both these weighted structures are relevant and a choice has to be made. To give an idea of this kind of duality between $\psi$ and $-\psi$ structures, we extend, in the setting of Killing graphs, the classical relation between the mean curvature of the graph and the isoperimetric properties of the parametrization domain. This is the content of the following  weighted versions of a result by E. Heinz, \cite{He2}, S.S. Chern, \cite{Ch}, H. Flanders, \cite{Fl}, and I. Salavessa \cite{Salavessa-PAMS}.
Define the ``standard'' and  the``weighted'' Cheeger constants of a domain $\Omega$ by, respectively,
\begin{equation}\nonumber
\mathfrak{b}(\Omega) = \inf_{D} \frac{\vol(\partial D)}{\vol(D)},
\quad
\mathfrak{b}_\psi (\Omega) = \inf_{D} \frac{\vol_\psi(\partial D)}{\vol_\psi(D)},
\end{equation}
where $D$ is a bounded subdomain with compact closure in $\Omega$ and with regular boundary $\partial D \not= \emptyset$.
\begin{proposition*}
Let $\Sigma = \mathrm{Graph}_{\Omega}(u) \hookrightarrow P \times_{e^{-\psi}} \R$ be an $n$-dimensional Killing graph defined over the domain $\Omega\subset P$. Then, the mean curvature $H$ of $\Sigma$, and its weighted version $H_{-\psi}$, satisfy the following inequalities:
\begin{equation}
n\inf_\Omega |H_{-\psi}|\le \mathfrak{b}(\Omega), \quad n\inf_\Omega |H|\le \mathfrak{b}_\psi (\Omega).
\end{equation}
In particular:
\begin{itemize}
 \item [(i)] If $\Omega \subset P_{\psi}$ has zero weighted Cheeger constant, and $\Sigma$ has constant mean curvature $H$,
then $\Sigma$ is a minimal graph.
\item [(ii)] If $\Omega \subset P$ has zero Cheeger constant, and $\Sigma \hookrightarrow M_{-\psi}$ has constant weighted mean curvature $H_{-\psi}$, then $\Sigma$ is a $(-\psi)$-minimal graph.
\end{itemize}

\end{proposition*}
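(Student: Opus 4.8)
The plan is to derive both inequalities from the two capillarity-type identities \eqref{capillary-2} and \eqref{capillary-3} by integrating over a test subdomain $D$ with compact closure in $\Omega$ and regular boundary, and then invoking the divergence theorem. The geometric input that makes everything work is the elementary pointwise bound on the tangential field $X := \nabla^P u / W$, namely
\[
|X| = \frac{|\nabla^P u|}{\sqrt{e^{2\psi} + |\nabla^P u|^2}} < 1 \quad \text{on } \bar\Omega ,
\]
which holds simply because $e^{2\psi} > 0$. Since $u \in C^\infty(\bar\Omega)$ and $W \ge e^{\psi} > 0$, the field $X$ is smooth up to $\partial D$, so the divergence theorem applies without issue.

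For the first inequality I would start from \eqref{capillary-2}, which reads $\div^P X = n H_{-\psi}$. Integrating over $D$ and using the standard divergence theorem gives
\[
n \int_D H_{-\psi} \, \dd P = \int_{\partial D} \langle X, \nu \rangle \, \dd(\partial D) ,
\]
where $\nu$ is the outward unit conormal; since $|\langle X, \nu\rangle| \le |X| < 1$, the right-hand side is bounded in absolute value by $\vol(\partial D)$, so that $n \big| \int_D H_{-\psi} \, \dd P \big| \le \vol(\partial D)$. The crux is then to convert this signed estimate into a bound on $\inf_\Omega |H_{-\psi}|$. If $c := \inf_\Omega |H_{-\psi}| = 0$ there is nothing to prove; otherwise $|H_{-\psi}| \ge c > 0$ forces $H_{-\psi}$ to have a constant sign on the connected subdomain $D$, whence $\big| \int_D H_{-\psi}\, \dd P \big| = \int_D |H_{-\psi}| \, \dd P \ge c \, \vol(D)$. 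Combining the two gives $n c \, \vol(D) \le \vol(\partial D)$; dividing by $\vol(D)$ and passing to the infimum over all admissible $D$ yields $n \inf_\Omega |H_{-\psi}| \le \mathfrak{b}(\Omega)$.

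The second inequality follows the same pattern but with the weighted structure. Rewriting \eqref{capillary-3} through the definition of the weighted divergence gives $\div^P(e^{-\psi} X) = e^{-\psi}\, n H$. Integrating over $D$ with the unweighted divergence theorem and bounding $|X| < 1$ as before,
\[
n \int_D H \, \dd P_\psi = \int_{\partial D} e^{-\psi} \langle X, \nu\rangle \, \dd(\partial D) \le \int_{\partial D} e^{-\psi} \, \dd(\partial D) = \vol_\psi(\partial D) .
\]
Applying the identical constant-sign reduction, now to the weighted integral, gives $n c' \, \vol_\psi(D) \le \vol_\psi(\partial D)$ with $c' := \inf_\Omega |H|$, hence $n \inf_\Omega |H| \le \mathfrak{b}_\psi(\Omega)$.

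Finally, the two itemized consequences are immediate specializations: in case (i), constancy of $H$ gives $\inf_\Omega |H| = |H|$, so $n|H| \le \mathfrak{b}_\psi(\Omega) = 0$ forces $H \equiv 0$; in case (ii), constancy of $H_{-\psi}$ together with $\mathfrak{b}(\Omega) = 0$ forces $H_{-\psi} \equiv 0$, i.e.\ $\Sigma$ is $(-\psi)$-minimal. I expect the only genuinely delicate point to be the passage from the signed flux integral to the quantity $\inf_\Omega|\cdot|$, which relies on the constant-sign observation (trivial once $\inf|\cdot|>0$); everything else is a direct application of the divergence theorem to the capillarity identities.
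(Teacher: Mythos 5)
Your proposal is correct and follows essentially the same route as the paper: integrate the capillarity identities \eqref{capillary-2} and \eqref{capillary-3} over a test subdomain, apply the (weighted) divergence theorem, bound the flux using $|\nabla^P u|/W \le 1$, and dispose of the sign issue by noting that either $\inf|\cdot|=0$ (trivial case) or the curvature function has constant sign. The only cosmetic difference is that you unwind $\div^P_\psi$ into an unweighted divergence of $e^{-\psi}\nabla^P u/W$ rather than invoking the weighted divergence theorem directly, which is an equivalent computation.
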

Indeed, if $D \Subset P$ is a relatively compact domain in $P$ with boundary $\partial D = \Gamma$ and outward pointing unit normal $\nu_0\in TP$, integrating \eqref{capillary-3} and using the weighted version of the divergence theorem
\[
\int_{D} \div_{\psi} Z\,  \dd P_{\psi} = \int_{\Gamma} \langle Z , \nu_{0} \rangle_{P}\,  d\Gamma_{\psi}
\]
we obtain
\[
\int_{D} nH \dd P_{\psi} = \int_{\Gamma} \left\langle \frac{\nabla^P u}{W},\nu_0\right\rangle_{P} \, \dd \Gamma_{\psi}
\]
If $H$ has constant sign, multiplying by $-1$ if necessary, we deduce that
\begin{equation*}
\int_D n|H| \dd P_{\psi}  \leq \int_\Gamma \left\vert \left\langle \frac{\nabla^P u}{W},\nu_0 \right\rangle \right\vert  \dd \Gamma_{\psi}
\end{equation*}
and recalling that $|\nabla^P u|/W\leq 1$, we conclude that
\[
n\inf_D |H| \int_D \dd P_{\psi} \leq \int_\Gamma \dd \Gamma_{\psi}.
\]
that is
\begin{equation}\label{Salavessa_weighted}
 n \inf_D |H| \vol_{\psi}(D) \leq \vol_{\psi}(\partial D).
\end{equation}
Note that  \eqref{Salavessa_weighted} certainly holds even if $H$ has not constant sign, for then $\inf |H| =0$. In a completely similar way, starting from equation \eqref{capillary-2}, we obtain
\begin{equation}\nonumber
 n \inf_{D} |H_{\psi}| \vol(D) \leq \vol(\partial D).
\end{equation}
The desired conclusions now follow trivially.\medskip

This brief discussion should help to put in the appropriate perspective the following theorem which represents the main result of the paper.

\begin{theoremA} \label{th_fheightestimate}
Let $(M,\g_{M})  $ be a complete, $(n+1)$-dimensional Riemannian manifold endowed with a complete Killing vector field $Y$ whose orthogonal distribution has constant rank $n$ and it is integrable. Let $(P,\g_{P})$ be an integral leaf of that distribution and let $\Omega\subset P$ be a smooth domain. Set $\psi=-\log(|Y|)$ and assume that:

\begin{enumerate}
\item[(a)] $-\infty<\inf_{{\Omega}} \psi \leq \sup_{{\Omega}} \psi<+\infty$;

\item[(b)] $\mathrm{vol}_{\psi}\left(  B_{R}^{P}\cap\Omega\right)  =\mathcal{O}(R^2)  $,
as $R\rightarrow+\infty$;

\item[(c)] $\Ric^{M}_\psi\geq 0$ in a neighborhood of $\Omega\times\R  \subset M_{\psi}$.

\end{enumerate}
Let $\Sigma=\mathrm{Graph}_{\bar \Omega}(u)$ be a Killing graph over $\bar \Omega$ with weighted mean curvature $H_{\psi} \equiv \mathrm{const} < 0$ with respect to the upward pointing Gauss map $\CN$. Assume that:

\begin{enumerate}
\item[(d)] The boundary $\partial\Sigma$ of $\Sigma$ lies in the slice
$P\times\{0\}$;

\item[(e)] The weighted height function of $\Sigma$ is bounded: $\sup_{\Sigma} |u e^{-\psi}|< +\infty$.
\end{enumerate}

\noindent Then
\begin{equation}
\label{hest}
0\leq u(x) e^{-\psi(x)}  \leq\frac{C}{|H_{\psi}|},
\end{equation}
where $C:=e^{2(\sup_\Omega \psi-\inf_\Omega\psi)}\geq 1$.
\end{theoremA}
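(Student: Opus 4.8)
The plan is to convert the geometric statement into a question of potential theory on the weighted graph $\Sigma_{\psi} := (\Sigma, \langle\,\cdot,\cdot\,\rangle_{\Sigma}, e^{-\psi}\dd\Sigma)$ and to run a maximum principle ``at infinity''. The engine is parabolicity, and I would first produce it from the growth hypotheses. Since $\dd\Sigma = W e^{-\psi}\dd P$ with $W=\sqrt{e^{2\psi}+|\nabla^{P}u|^{2}}$, the two-sided bound on $\psi$ in (a) and the bound on the weighted height in (e) let me compare the intrinsic distance on $\Sigma$ with the distance on $P$ and estimate the intrinsic weighted balls by $\vol_{\psi}(B^{\Sigma}_{R})\lesssim\vol_{\psi}(B^{P}_{cR}\cap\Omega)=\CO(R^{2})$ via (b). Quadratic weighted volume growth makes $\Sigma_{\psi}$ a $\psi$-parabolic manifold with boundary $\partial\Sigma$, and the conclusion I want to extract is the maximum principle: a function that is bounded above, weighted-subharmonic on $\Sigma$, and $\le k$ on $\partial\Sigma$ is $\le k$ everywhere (and dually for superharmonic functions bounded below).

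Two pointwise identities drive the estimate. Writing $Y^{\top}$ for the tangential part of the Killing field and $f:=\langle \CN,Y\rangle_{M}=1/W>0$ for the support function of \eqref{NY}, the Killing property together with the definition \eqref{Hpsi} of $H_{\psi}$ yields the weighted height identity $\div^{\Sigma}_{\psi}Y^{\top}=nH_{\psi}f$; and since a direct computation gives $\nabla^{\Sigma}u=e^{2\psi}Y^{\top}$, this exhibits the flow-parameter height $u$ as a solution of the equation $\div^{\Sigma}_{\psi}\!\big(e^{-2\psi}\nabla^{\Sigma}u\big)=nH_{\psi}f$, whose coefficient $e^{-2\psi}$ is pinched between positive constants by (a), so its parabolicity is equivalent to $\psi$-parabolicity of $\Sigma_{\psi}$. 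Second, because $Y$ preserves both the metric and the weight (indeed $Y(\psi)=\partial_{s}\psi=0$) and $H_{\psi}$ is constant, $f$ lies in the kernel of the weighted Jacobi operator, $\Delta^{\Sigma}_{\psi}f+\big(|A|^{2}+\Ric^{M}_{\psi}(\CN,\CN)\big)f=0$; with $f>0$ and $|A|^{2}\ge 0$, assumption (c) then forces $\Delta^{\Sigma}_{\psi}f\le 0$, so the support function is weighted-superharmonic.

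For the lower bound I would note that $nH_{\psi}f<0$, so $u$ is a bounded supersolution of the elliptic operator above, vanishing on $\partial\Sigma$ by (d); the parabolic minimum principle gives $u\ge 0$, hence $u e^{-\psi}\ge 0$. For the upper bound the plan is to assemble from $u$ and $f$ an auxiliary function $\varphi$ that is weighted-subharmonic, bounded above, and with controlled boundary value on the slice $P\times\{0\}$, and then to apply the parabolic maximum principle to bound $\varphi$, and thereby $u e^{-\psi}$, by a multiple of $1/|H_{\psi}|$. The sign $H_{\psi}<0$, the superharmonicity of $f$ coming from (c), and a Cauchy--Schwarz bound on $A$ are exactly the inputs that fix the sign of $\Delta^{\Sigma}_{\psi}\varphi$. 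The constant $C=e^{2(\sup_{\Omega}\psi-\inf_{\Omega}\psi)}$ is then forced by (a): it measures the loss incurred in passing among the three naturally weighted quantities $f=1/W$, the angle $e^{\psi}f=\langle \CN,Y/|Y|\rangle_{M}\in(0,1]$, and the weighted height $u e^{-\psi}$, each of which carries a different power of $e^{\psi}$.

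I expect the main obstacle to be the potential-theoretic step rather than the pointwise computations: for a bounded domain $\Omega$ the ordinary maximum principle would close the argument at once, and the entire content lies in the unbounded case. There I must (i) set up the maximum principle at infinity for weighted manifolds with boundary, and (ii) transfer the \emph{extrinsic} weighted volume growth (b) on $\Omega\subset P_{\psi}$ into a genuine \emph{intrinsic} volume bound on $\Sigma_{\psi}$, so that parabolicity actually applies to the functions $u$ and $\varphi$ built above. A secondary but delicate difficulty is the bookkeeping that selects the precise subharmonic combination $\varphi$ and pins down $C$, since $f$, the angle, and $u e^{-\psi}$ live on three distinct weighted scales.
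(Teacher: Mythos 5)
Your proposal follows the paper's argument essentially step for step: the transfer of the extrinsic weighted volume growth (b) into an intrinsic quadratic bound on $\vol_{\psi}(B^{\Sigma}_{R})$ giving parabolicity, the two identities $\Delta^{\Sigma}_{3\psi}u=nH_{\psi}e^{2\psi}\langle Y,\CN\rangle_{M}$ (your divergence-form equation for $u$, with the pinched coefficient $e^{-2\psi}$ playing the role of the paper's passage between $\psi$- and $3\psi$-parabolicity) and the weighted Jacobi equation for the support function, the lower bound from superharmonicity of $u$ vanishing on $\partial\Sigma$, and the upper bound from the Ahlfors-type maximum principle applied to exactly the combination you describe --- the paper takes $\varphi=H_{\psi}\,u\,e^{-2\sup_{\Omega}\psi}+\langle Y,\CN\rangle_{M}$, made superharmonic by $|A|^{2}\geq nH^{2}$ together with (c), and concludes via $\langle Y,\CN\rangle_{M}\leq e^{-\psi}$. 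The only detail you leave open is this explicit choice of $\varphi$, which is precisely the ``bookkeeping'' you flag and works out as you anticipate.
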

It is worth pointing out that the constant $C$ in \eqref{hest} depends only on the variation  of $\psi$ .

The strategy of proof follows the main steps in \cite{IPS-Crelle}. For instance, in order to to get the upper estimate, we will use potential theoretic properties of the weighted manifold with boundary $\Sigma_{\psi} = (\Sigma, \g_{\Sigma}, \dd \Sigma_{\psi})$.
The idea is to show that, thanks to the capillarity equation \eqref{capillary-2}, the moderate volume growth assumption (b) on $\Omega$ is inherited by $\Sigma_{\psi}$. Therefore, the weighted Laplacian, defined on a smooth function $w : \Sigma \to \R$ by
\[
\Delta^{\Sigma}_{\psi} w = \div^{\Sigma}_{\psi}(\nabla^{\Sigma} w) = \Delta^{\Sigma} w - \langle \nabla^{\Sigma} \psi, \nabla^{\Sigma} w \rangle_{\Sigma},
\]
satisfies a global maximum principle similar to that valid on a compact set. And it is precisely in the compact setting that, throughout the construction of explicit examples, we shall show that our height estimate is essentially sharp. Moreover, in the spirit of  known results in the product case (see above), since we do not impose any restriction on the size of the mean curvature, the ambient space $M_{\psi}$ is assumed to have non-negative weighted (i.e. Bakry-\'Emery) Ricci curvature.
Note however, see Remark~\ref{RmkHLR}, that the result extends to the case where $\Ric^{M}_\psi$ is bounded below by a negative constant, provided a suitable bound on $H^2$ is imposed.

On the other hand, to show that $u\geq 0$ one uses the parabolicity of  $\Delta^\Sigma_{3\psi}$. This provides further instance of the  interplay between different weight structures on $M.$

It is worth to point out that, as it often happens in submanifolds theory, the case $n=2$ is very special. In fact, for two dimensional Killing graphs, one can show that the curvature assumption (c) implies the boundedness condition (e). Therefore, the previous result takes the following striking form.

\begin{corollaryA} \label{coro_fheightestimate}
Let $(M,\g_{M})  $ be a complete, $3$-dimensional
Riemannian manifold endowed with a complete Killing vector field $Y$ whose orthogonal
distribution has constant rank $2$ and it is integrable. Let $(P,\g_{P})$ be an integral leaf of that distribution and let $\Omega\subset P$ be a smooth domain. Set $\psi=-\log(|Y|)$ and assume that:
\begin{enumerate}
\item[(a)] $-\infty<\inf_{{\Omega}} \psi \leq \sup_{{\Omega}} \psi<+\infty$;

\item[(b)] $\vol_{\psi}\left(  B_{R}^{P}\cap\Omega\right)  = \CO(R^2)  $,
as $R\rightarrow+\infty$;

\item[(c)] $\Ric^{M}_\psi\geq 0$ in a neighborhood of $\Omega\times\R $.

\end{enumerate}
Let $\Sigma=\mathrm{Graph}_{\Omega}(u)$ a be a $2$-dimensional Killing graph over $\Omega$ with constant  weighted mean curvature $H_{\psi} \equiv \mathrm{const}<0$ with respect to the upward pointing Gauss map and with boundary $\partial \Sigma \subset P\times\{0\}$.

\noindent Then
\[
0\leq u e^{-\psi} (x)  \leq\frac{C}{|H_{\psi}|},
\]
where $C:=e^{2(\sup_\Omega \psi-\inf_\Omega\psi)}\geq 1$.
\end{corollaryA}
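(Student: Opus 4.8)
The plan is to obtain the corollary directly from Theorem~\ref{th_fheightestimate}. Hypotheses (a)--(d) of the corollary are identical to those of the theorem, so the only missing ingredient is the boundedness hypothesis (e), $\sup_{\Sigma}|ue^{-\psi}|<+\infty$. I would therefore reduce the whole statement to the implication that, \emph{when $n=2$}, conditions (a), (b), (c) already force (e); granting this, Theorem~\ref{th_fheightestimate} applies word for word and yields \eqref{hest} with the same constant $C=e^{2(\sup_{\Omega}\psi-\inf_{\Omega}\psi)}$.

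The device for controlling the geometry is the support function $\Theta:=\langle\CN,Y\rangle_{M}=1/W$, which is positive by \eqref{NY}. Since $Y$ is Killing and, because $\bar\psi$ is independent of the flow parameter, satisfies $Y\bar\psi=0$, its flow is a one-parameter group of measure-preserving isometries of $M_{\psi}$ and hence preserves the weighted mean curvature. Thus the variation of $\Sigma$ induced by $Y$ is through hypersurfaces of the same constant $H_{\psi}$, and its normal component $\Theta$ lies in the kernel of the weighted Jacobi operator:
\[
\Delta^{\Sigma}_{\psi}\Theta+\big(|A|^{2}+\Ric^{M}_{\psi}(\CN,\CN)\big)\Theta=0 .
\]
By (c) the zeroth order coefficient is nonnegative, so $\Delta^{\Sigma}_{\psi}\Theta\le 0$ and $\Theta$ is a positive weighted-superharmonic function on $\Sigma_{\psi}$.

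It is now that $n=2$ becomes decisive. Using the weighted volume estimate for intrinsic balls proved earlier in the paper together with the quadratic growth hypothesis (b), the intrinsic balls of $\Sigma_{\psi}$ have at most quadratic weighted volume; in dimension two this is precisely the borderline growth forcing $\Sigma_{\psi}$, as a weighted manifold with boundary, to be parabolic. The positive superharmonic function $\Theta$ is then rigidly constrained by this parabolicity --- in higher dimension quadratic growth no longer implies parabolicity, which is the analytic reason why, as recalled in the Introduction, slab confinement is available only in low dimension.

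On the other hand, hypothesis (a) alone already gives good control of the height: a direct computation from \eqref{1st} yields $|\nabla^{\Sigma}u|^{2}=|\nabla^{P}u|^{2}/(1+e^{-2\psi}|\nabla^{P}u|^{2})\le e^{2\psi}$, so $u$ is globally Lipschitz on $\Sigma$ and, vanishing on $\partial\Sigma$ by (d), grows at most linearly in the intrinsic distance; since $\psi$ is bounded, so does the weighted height $ue^{-\psi}$. The main obstacle is to merge these two facts: I expect the heart of the argument to be the potential theory of the weighted manifold-with-boundary $\Sigma_{\psi}$, used to upgrade the at-most-linear growth of $ue^{-\psi}$ to genuine boundedness by playing it against the parabolic rigidity of the positive superharmonic angle $\Theta$. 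Once this delicate passage establishes (e), Theorem~\ref{th_fheightestimate} closes the proof.
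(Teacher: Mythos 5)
Your reduction is the right one --- the corollary does follow from Theorem~\ref{th_fheightestimate} once one shows that, for $n=2$, hypotheses (a)--(c) force the boundedness condition (e) --- but the mechanism you propose for establishing (e) does not work, and the step you label ``the heart of the argument'' is never actually carried out. Worse, it is circular: the only route in the paper from hypothesis (b) to quadratic weighted volume growth of the intrinsic balls of $\Sigma$ (and hence to $\psi$-parabolicity of $\Sigma_{\psi}$) is Lemma~\ref{lemma_volume}, whose hypothesis \eqref{volume-u+Hf} is precisely $\sup_{\Omega}|u e^{-\psi}|+\sup_{\Omega}|H_{D\psi}|<+\infty$, i.e.\ it already assumes the boundedness of the weighted height you are trying to prove. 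So you cannot invoke parabolicity of $\Sigma_{\psi}$ before (e) is known. Your heuristic for why $n=2$ is decisive is also off: Proposition~\ref{prop-fparab-volume} gives parabolicity from quadratic weighted volume growth in \emph{every} dimension, so parabolicity is not where the dimension restriction enters. (The Lipschitz bound $|\nabla^{\Sigma}u|\le e^{\psi}$ is correct but only yields linear growth of $u$ in the intrinsic distance to $\partial\Sigma$, which is useless without a bound on that distance.)

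What the paper actually does is entirely extrinsic and specific to surfaces in $3$-manifolds: it introduces the Perelman scalar curvature $R^{M}_{\psi}=\mathrm{Tr}(\Ric^{M}_{\psi})+\Delta_{\psi}\psi$ of the weighted ambient space, verifies via the identity $\Delta_{\psi}\psi=e^{2\psi}\Ric^{M}_{\psi}(Y,Y)$ that hypothesis (c) forces $R^{M}_{\psi}\geq 0$, and then invokes Espinar's distance estimate \cite[Theorem 4.2]{Esp}: for a surface of constant weighted mean curvature $H_{\psi}$ in a weighted $3$-manifold with $R^{M}_{\psi}\geq 0$ one has $|u(x)e^{-\psi}|\leq \mathrm{dist}(y,\partial\Sigma)\leq C(|H_{\psi}|)<+\infty$. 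That is the genuinely two-dimensional input replacing (e); your Jacobi-field computation for the angle function $\Theta=\langle \CN,Y\rangle_{M}$ is correct (it is equation \eqref{eq-DeltafAngle} with $Y^{T}(H_{\psi})=0$) but plays its role only later, inside the proof of Theorem~\ref{th_fheightestimate} itself, not in the passage from (a)--(c) to (e).
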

\bigskip

The organization of the paper is as follows:\smallskip

\noindent In Section \ref{PotTheory} we prove some potential theoretic properties of weighted manifolds with boundary, related to global maximum principles for the weighted Laplacian. These results, using new direct arguments, extend to the weighted context previous investigation in \cite{IPS-Crelle}.\smallskip

\noindent Section \ref{HeightEst} contains the proof of the quantitative height estimates for Killing graphs over possibly unbounded domains. To this end, we shall introduce: (i) some basic formulas for the weighted Laplacian of the height and the angle functions of the Killing graph and (ii) weighted volume growth estimate that will enable us to apply the global maximum principles obtained in Section \ref{PotTheory}.\smallskip

\noindent In Section \ref{section-examples} we construct concrete examples of Killing graphs with constant weighted mean curvature which, in particular, show that the constant $C$ in our estimate has the correct functional dependence on the variation $\sup \psi-\inf \psi$ of $\psi$.\\

\noindent{\bf Acknowledgments.}
 It is  our pleasure to thank the anonymous referee for a very careful reading of the manuscript and for important  suggestions that greatly improved the exposition.

\section{Some potential theory on weighted manifolds}\label{PotTheory}
The aim of this section is to study potential theoretic properties, and, more precisely, parabolicity with respect to the weighted Laplacian, of a weighted Riemannian manifold with possibly empty boundary. This relies on  the notion of weak sub (super) solution subject to Neumann boundary conditions that we are going to introduce. In order to avoid confusion, and since we are dealing with general results valid on any weighted manifold, throughout this Section we shall call $f:M \to \R$ the weight function. The symbol $\psi$ is thus reserved to the peculiar weight related to Killing graphs.\medskip

Let $M_f = (M,\g,e^{-f}\dd M)$ be a smooth, $n$-dimensional, weighted manifold with smooth
boundary $\partial M\neq\emptyset$ oriented by the exterior unit normal $\nu.$ The interior of $M$ is denoted by $\mathrm{int}M = M \setminus \partial M$. By a domain in $M$ we mean a non-necessarily connected open set $D\subseteq M$. We say
that the domain $D$ is smooth if its topological boundary $\partial D$ is a
smooth hypersurface $\Gamma$ with boundary $\partial\Gamma=\partial
D\cap\partial M$. Adopting the notation in \cite{IPS-Crelle}, for any domain
$D\subseteq M$ we define
\begin{align*}
\partial_{0}D&=\partial D\cap\mathrm{int}M,\\
\partial_{1}D&=\partial M\cap D
\end{align*}
We will refer to $\partial_0 D$ and to $\partial_1 D$ respectively as the \textit{Dirichlet boundary} and the \textit{Neumann boundary} of the domain $D$. Finally, the \textit{interior part} of $D$, in the sense of manifolds with boundary, is defined as
\[
 \mathrm{int}D=D \cap  \mathrm{int}M,
\]
so that, in particular,
\[
D= \mathrm{int}D \cup \partial_1 D.
\]
\medskip

We recall that the Sobolev space $W^{1,2}( \mathrm{int}D_f)$ is defined as the Banach space of functions $u \in L^2( \mathrm{int}D_f)$ whose distributional gradient satisfies $\nabla u \in L^2( \mathrm{int}D_f)$. Here we are using the notation
\[
L^2( \mathrm{int}D_f):= \{w\, :\, \int_{D} w^2\dd M_f<+\infty\}.
\]
By the Meyers-Serrin density result, this space coincides with the closure of $C^{\infty}(\mathrm{int}D)$ with respect to the Sobolev norm $\|u\|_{W^{1,2}(M_f)}=\|u\|_{L^2(M_f)}+\|\nabla u\|_{L^2(M_f)}$.
Moreover, when $D=M$ and $M$ is complete, $W^{1,2}(\mathrm{int}M_f)$ can be also realised as the $W^{1,2}(M_f)$-closure of $C^{\infty}_{c}(M)$.

Finally, the space $W^{1,2}_{\mathrm{loc}}(\mathrm{int}D_f)$ is defined by the condition that $u \cdot \chi \in W^{1,2}(\mathrm{int}D_f)$ for every cut-off function $\chi \in C^{\infty}_c(\mathrm{int}D)$. We extend this notion by including the Neumann boundary of the domain as follows
\[
W^{1,2}_{\mathrm{loc}}(D_f)=\{u \in W^{1,2}(\mathrm{int}\Omega_f), \text{ }\forall \text{domain } \Omega \Subset  D=\mathrm{int}D \cup \partial_1 D\}.
\]

Now, suppose $D\subseteq M$ is any domain. We put the following Definition. Recall from the Introduction that the $f$-Laplacian of $M_{f}$ is the second order differential operator defined by
\[
\Delta_{f} w = e^{f} \div(e^{-f} \nabla w) = \Delta w - \langle \nabla f , \nabla w \rangle,
\]
where $\Delta$ denotes the Laplace-Beltrami operator of $(M,\g)$. Clearly, as one can verify from the weighted divergence theorem, $-\Delta_{f}$ is a non-negative, symmetric operator on $L^{2}(M_{f})$.
\begin{definition}
By a weak Neumann sub-solution $u\in W_{loc}^{1,2}\left(D_f\right)  $ of the $f$-Laplace equation, i.e., a weak solution of the problem%
\begin{equation}
\left\{
\begin{array}
[c]{ll}%
\Delta_f u\geq0 & \text{on }\mathrm{int}D\\
\dfrac{\partial u}{\partial\nu}\leq0 & \text{on }\partial_{1}D,
\end{array}
\right.  \label{subneumannproblem}%
\end{equation}
we mean that the following inequality%
\begin{equation}
-\int_{D}\left\langle \nabla u,\nabla\varphi\right\rangle \dd M_f\geq0 \label{fsubsol}%
\end{equation}
holds for every $0\leq\varphi\in C_{c}^{\infty}\left(  D\right)  $.
Similarly, by taking $D=M$, one defines the notion of weak Neumann subsolution of the $f$-
Laplace equation on $M_f$ as a function $u\in W_{loc}^{1,2}\left(  M_f\right)  $
which satisfies (\ref{fsubsol}) for every $0\leq\varphi\in C_{c}^{\infty
}\left(  M\right)  $. As usual, the notion of weak supersolution can be obtained by reversing the inequality and, finally, we speak of a weak solution when the equality holds in (\ref{fsubsol}) without any sign condition on $\varphi$.
\end{definition}

\begin{remark}
\rm{
Analogously to the classical case, in the above definition, it is equivalent to require that
(\ref{fsubsol}) holds for every $0\leq\varphi\in \mathrm{Lip}_{c}\left(  D\right)  $.
}
\end{remark}

Following the terminology introduced in \cite{IPS-Crelle}, we are now ready to give the following
\begin{definition}
\label{def_fparab} A weighted manifold $M_f$ with boundary $\partial
M\neq\emptyset$ oriented by the exterior unit normal $\nu$ is said to be $f$-parabolic if any bounded above, weak Neumann
subsolution of the $f$-Laplace equation on $M_f$ must be constant. Namely, for
every $u\in C^{0}\left(  M\right)  \cap W_{loc}^{1,2}\left(  M_f\right)  $,%
\begin{equation}
\label{def_fpar}%
\begin{array}
[c]{ccc}%
\left\{
\begin{array}
[c]{ll}%
\Delta_f u\geq0 & \text{on }\mathrm{int}M\\
\dfrac{\partial u}{\partial\nu}\leq0 & \text{on }\partial M\\
\sup_{M}u<+\infty &
\end{array}
\right.  & \Rightarrow & u\equiv\mathrm{const}.
\end{array}
\end{equation}

\end{definition}

In the boundary-less setting it is by now well-known that $f$--parabolicity is related to a wide class of equivalent properties
involving the recurrence of the Brownian motion, $f$--capacities of condensers,
the heat kernel associated to the drifted laplacian, weighted volume growth,
function theoretic tests, global divergence theorems and many other geometric and potential-analytic properties. All these characterization can be proven to hold true also in case of weighted manifolds with non-empty boundaries. However, here we limit ourselves to pointing out the following characterization.

\begin{theorem}\label{thm_fAhlfors}
A weighted manifold $M_f$ is $f$-parabolic if and
only the following maximum principle holds. For every domain $D\subseteq M$
with $\partial_{0}D\neq\emptyset$ and for every $u\in C^{0}\left(
\overline{D}\right)  \cap W_{loc}^{1,2}\left(  D_f\right)  $ satisfying
\[
\left\{
\begin{array}
[c]{ll}%
\Delta_f u\geq0 & \text{on }\mathrm{int}D\\
\dfrac{\partial u}{\partial\nu}\leq0 & \text{on }\partial_{1}D\\
\sup\limits_{D}u<+\infty &
\end{array}
\right.
\]
in the weak sense, it holds%
\[
\sup_{D}u=\sup_{\partial_{0}D}u.
\]
Moreover, if $M_f$ is a $f$-parabolic manifold with boundary
$\partial M\neq\emptyset$ and if $u\in C^{0}\left(  M\right)  \cap W_{loc}%
^{1,2}\left(  \mathrm{int}M_f\right)  $ satisfies%
\[
\left\{
\begin{array}
[c]{ll}%
\Delta_f u\geq0 & \text{on }\mathrm{int}M\\
\sup_{M}u<+\infty &
\end{array}
\right.
\]
then%
\[
\sup_{M}u=\sup_{\partial M}u.
\]
\end{theorem}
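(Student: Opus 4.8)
The plan is to establish the equivalence between $f$-parabolicity (Definition \ref{def_fparab}) and the stated maximum principle, and then prove the final ``Moreover'' assertion as an easy consequence. I would organize the argument as a cycle of implications. For the forward direction, I assume $M_f$ is $f$-parabolic and let $D \subseteq M$ be a domain with $\partial_0 D \neq \emptyset$, together with a function $u$ as in the hypotheses. Set $m = \sup_{\partial_0 D} u$; without loss of generality $m < +\infty$ (otherwise there is nothing to prove). The idea is to extend $u$ from $\overline{D}$ to all of $M$ in a way that produces a global bounded-above weak Neumann subsolution to which $f$-parabolicity applies. Specifically, I would consider the function $v$ defined on $M$ by $v = \max\{u, m\}$ on $\overline{D}$ and $v = m$ on $M \setminus D$. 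The key point is that patching the subsolution $u$ with the constant $m$ across the Dirichlet boundary $\partial_0 D$ preserves the subsolution property, because the maximum of two subsolutions is a subsolution and the gluing occurs exactly along the level set where $u \le m$. This $v$ is continuous, lies in $W^{1,2}_{loc}(M_f)$, is bounded above, satisfies $\Delta_f v \ge 0$ weakly on $\mathrm{int}M$, and satisfies the Neumann condition $\partial v/\partial\nu \le 0$ on $\partial M$ since on $\partial_1 D$ this is inherited from $u$ and elsewhere $v$ is locally constant. By $f$-parabolicity $v \equiv \mathrm{const} = m$, whence $u \le m$ on $D$, giving $\sup_D u = \sup_{\partial_0 D} u$.

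For the reverse direction I assume the maximum principle holds and want to deduce $f$-parabolicity, i.e.\ that a bounded-above $u$ with $\Delta_f u \ge 0$ on $\mathrm{int}M$ and $\partial u/\partial\nu \le 0$ on all of $\partial M$ is constant. Here the subtlety is that the global problem has \emph{no} Dirichlet boundary ($\partial_0 M = \emptyset$), so the maximum principle as stated does not directly apply to $D = M$. The plan is to apply the maximum principle to a suitably exhausting family of subdomains. Fix a point $o \in \mathrm{int}M$ and, for $R > 0$, set $D = M \setminus \overline{B_R(o)}$, so that $\partial_0 D = \partial B_R(o) \cap \mathrm{int}M \neq \emptyset$ while $\partial_1 D = \partial M \cap D$ carries the inherited Neumann condition. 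The maximum principle gives $\sup_D u = \sup_{\partial_0 D} u = \sup_{\partial B_R(o)} u$. Letting this be combined with the observation that $u$ restricted to the compact-in-spirit region also attains its supremum, one argues that the global supremum must be attained and then invokes a strong maximum principle (or a direct argument via the subsolution property and connectedness) to force $u$ to be constant.

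For the final ``Moreover'' statement, I take $M_f$ to be $f$-parabolic and $u \in C^0(M) \cap W^{1,2}_{loc}(\mathrm{int}M_f)$ bounded above with $\Delta_f u \ge 0$ on $\mathrm{int}M$, but now \emph{no} Neumann condition is imposed on $\partial M$, and the claim is $\sup_M u = \sup_{\partial M} u$. The natural approach is to apply the already-established maximum principle to the domain $D = \mathrm{int}M$. For this domain one has $\partial_0 D = \emptyset$ and $\partial_1 D = \emptyset$, so instead I would use $D = \mathrm{int}M$ viewed with its Dirichlet boundary $\partial_0(\mathrm{int}M) = \partial M$: here $\partial_0 D = \partial M \neq \emptyset$ and $\partial_1 D = \emptyset$, so the Neumann condition is vacuous and the maximum principle yields precisely $\sup_{\mathrm{int}M} u = \sup_{\partial M} u$, and continuity of $u$ on $M$ gives $\sup_M u = \sup_{\partial M} u$.

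The main obstacle I anticipate is the gluing step in the forward direction, where I must verify rigorously that the patched function $v = \max\{u, m\}$ extended by $m$ genuinely remains a weak Neumann subsolution across the Dirichlet boundary $\partial_0 D$. This requires checking that testing inequality \eqref{fsubsol} against an arbitrary $0 \le \varphi \in C^\infty_c(M)$ decomposes correctly: on the region where $v = u > m$ the subsolution inequality for $u$ applies, on the region where $v = m$ the gradient vanishes, and the interface contributions cancel because $\nabla v$ is the truncation $\nabla u \cdot \chi_{\{u > m\}}$ in the distributional sense (a standard but delicate fact about truncations of Sobolev functions). One must confirm that the resulting $v$ lies in $W^{1,2}_{loc}(M_f)$ and that the Neumann condition is preserved where $v$ agrees with $u$; the bookkeeping near $\partial_0 D \cap \partial M$ (the corner $\partial\Gamma$) is where I would expect to spend the most care.
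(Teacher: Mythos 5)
Your forward implication is correct but follows a genuinely different route from the paper's. You glue the truncation $\max\{u,m\}$ across $\partial_0 D$ to manufacture a global bounded-above Neumann subsolution and then invoke Definition~\ref{def_fparab} directly; the paper instead passes to the circle bundle $\hat M = M\times_{e^{-f}}\mathbb{T}$, shows in Proposition~\ref{prop_equivfpar} that $f$-parabolicity of $M_f$ is equivalent to ordinary parabolicity of $\hat M$, and then quotes the unweighted Ahlfors maximum principle of \cite{IPS-Crelle} applied to $\hat u(x,t)=u(x)$. Your version is self-contained and avoids the reduction, at the price of the truncation--extension lemma (membership of the zero-extension of $(u-m)^{+}$ in $W^{1,2}_{loc}$, and the bookkeeping at the corner $\partial_0 D\cap\partial M$), whose delicacy you correctly flag.

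The other two parts have genuine gaps. In the reverse implication, after applying the maximum principle on $D=M\setminus\overline{B_R(o)}$ you still need a strong maximum principle, valid up to the Neumann boundary, for functions that are merely $C^{0}\cap W^{1,2}_{loc}$ and satisfy the boundary condition only in the integral sense; this Hopf-type ingredient is nontrivial, is not established anywhere in your argument, and your reduction also implicitly uses compactness of $\overline{B_R(o)}$, i.e.\ completeness, which the theorem does not assume. The paper avoids all of this with a level-set trick: if $v$ were a non-constant bounded-above Neumann subsolution, choose $\eta<\sup_M v$ so close to $\sup_M v$ that $\mathrm{int}M\not\subseteq D_\eta=\{v>\eta\}$; then $\partial_0 D_\eta\neq\emptyset$ and $\sup_{\partial_0 D_\eta}v=\eta<\sup_{D_\eta}v$, which contradicts the maximum principle outright. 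For the ``Moreover'' statement your choice $D=\mathrm{int}M$ does not work: with the paper's definitions $\partial_0 D=\partial D\cap\mathrm{int}M$, and since $\partial(\mathrm{int}M)=\partial M$ is disjoint from $\mathrm{int}M$ one gets $\partial_0(\mathrm{int}M)=\emptyset$, not $\partial M$, so the first part is not applicable to this domain. A correct derivation from the first part is again by level sets: if $\sup_{\partial M}u<\eta<\sup_M u$, then $\overline{D_\eta}\subseteq\mathrm{int}M$, hence $\partial_1 D_\eta=\emptyset$ and $\partial_0 D_\eta=\partial D_\eta\neq\emptyset$, and the first part yields $\sup_{D_\eta}u=\eta$, a contradiction. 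The paper instead deduces this case from the corresponding statement for the parabolic manifold $\hat M$ in \cite{IPS-Crelle}.
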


We refer the reader to \cite[Theorem 0.9]{IPS-Crelle} for a detailed proof of the
previous result in the unweighted setting. Although the proof of this theorem can be deduced
adapting to the weighted laplacian $\Delta_{f}$ the arguments in \cite{IPS-Crelle} (indeed, all results obtained there can be adapted to the weighted case with only minor modifications of the proofs),
we provide here a shorter and more elegant argument. In order to do this we will need the following preliminary fact that will be proved without the use of any capacitary argument and, therefore, can be adapted to deal also with the $f$-parabolicity under Dirichlet boundary conditions; see \cite{PPS-L1Liouville} for old results and recent advances on the Dirichlet parabolicity of (unweighted) Riemannian manifolds.

\begin{proposition}\label{prop_equivfpar}
Let $M_f$ be a weighted manifold with boundary $\partial M$ oriented by the exterior unit normal $\nu$ and consider the warped product manifold $\hat{M}=M\times_{e^{-f}}\mathbb{T}$, with boundary $\partial\hat{M}=\partial{M}\times\mathbb{T}$ oriented by the exterior unit normal $\hat \nu(x,\theta) = (\nu(x),0)$.  Here we are setting $\mathbb{T}=\R /\mathbb{Z}$, normalized so that $\mathrm{vol}(\mathbb{T})=1$. Then $M_f$ is $f$-parabolic if and only if $\hat M$ is parabolic.
\end{proposition}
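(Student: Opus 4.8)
The plan is to run everything through the elementary ``fibre dictionary'' provided by the warped structure. The metric of $\hat M=M\times_{e^{-f}}\mathbb{T}$ is $\g+e^{-2f}\,\dd\theta\otimes\dd\theta$, so that $\sqrt{\det}=e^{-f}\sqrt{\det\g}$ and the Riemannian measure factors as $\dd\hat M=\dd M_f\,\dd\theta$. Writing the gradient and Laplace--Beltrami operator of $\hat M$ as $\nabla^{\hat M}$ and $\Delta_{\hat M}$, for any $u=u(x)$ lifted trivially from $M$ one has $\nabla^{\hat M}u=\nabla u$ (a horizontal field) and, in divergence form, $\Delta_{\hat M}u=e^{f}\div(e^{-f}\nabla u)=\Delta_f u$. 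Combined with $\vol(\mathbb{T})=1$ this yields the two identities $\int_{\hat M}\phi\,\dd\hat M=\int_M\phi\,\dd M_f$ and $\int_{\hat M}|\nabla^{\hat M}\phi|^2\,\dd\hat M=\int_M|\nabla\phi|^2\,\dd M_f$ for every $\theta$-independent $\phi$. All the weak manipulations below are instances of this dictionary.

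For the implication ``$\hat M$ parabolic $\Rightarrow M_f$ is $f$-parabolic'' I would simply lift. Given a bounded-above $u\in C^0(M)\cap W_{loc}^{1,2}(M_f)$ solving \eqref{subneumannproblem} on $M$, set $\hat u(x,\theta)=u(x)$. For $0\le\Phi\in C^\infty_c(\hat M)$, integrating in $\theta$ first shows that the fibre average $\bar\Phi(x)=\int_{\mathbb{T}}\Phi(x,\cdot)\,\dd\theta$ is a nonnegative test function in $C^\infty_c(M)$, and since $\nabla^{\hat M}\hat u=\nabla u$ is horizontal, $-\int_{\hat M}\langle\nabla^{\hat M}\hat u,\nabla^{\hat M}\Phi\rangle_{\hat M}\,\dd\hat M=-\int_M\langle\nabla u,\nabla\bar\Phi\rangle\,\dd M_f\ge 0$. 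Because $\hat\nu=(\nu,0)$, the Neumann boundary is respected automatically, and $\sup_{\hat M}\hat u=\sup_M u<+\infty$. Thus $\hat u$ is an admissible subsolution on $\hat M$; parabolicity forces it constant, hence $u\equiv\mathrm{const}$.

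The converse is the heart of the matter, and the \emph{main obstacle} is that an arbitrary bounded-above subsolution $v$ on $\hat M$ genuinely depends on $\theta$, so it cannot merely be projected to $M$. My plan is a two-step descent. After reducing to bounded $v$ (replace $v$ by $e^{v}$, a bounded subsolution that is constant iff $v$ is), I would first test $v$ against lifts $\hat\varphi(x,\theta)=\varphi(x)$ of $0\le\varphi\in C^\infty_c(M)$; the dictionary gives $-\int_M\langle\nabla\bar v,\nabla\varphi\rangle\,\dd M_f\ge 0$ for the fibre average $\bar v(x)=\int_{\mathbb{T}}v(x,\cdot)\,\dd\theta$, so by $f$-parabolicity $\bar v\equiv c$. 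The decisive second step is to introduce the bounded \emph{fibre variance} $G(x)=\int_{\mathbb{T}}\big(v(x,\cdot)-c\big)^2\,\dd\theta\ge 0$. Setting $g=v-c$ (so $\int_{\mathbb{T}}g\,\dd\theta\equiv 0$, $\nabla g=\nabla v$), I would test the subsolution inequality for $v$ against the nonnegative $\Phi=(g+\|g\|_\infty)\varphi$, with $\varphi$ lifted from $M$. The term carrying the constant $\|g\|_\infty$ drops out because $\int_{\mathbb{T}}\nabla_M g\,\dd\theta=\nabla\bar g=0$, leaving exactly
\[
\int_{\hat M}\varphi\,|\nabla^{\hat M}g|^2\,\dd\hat M\ \le\ -\frac{1}{2}\int_M\langle\nabla G,\nabla\varphi\rangle\,\dd M_f .
\]
In particular $-\int_M\langle\nabla G,\nabla\varphi\rangle\,\dd M_f\ge 0$, so $G$ is a bounded $f$-subsolution; $f$-parabolicity gives $G\equiv\mathrm{const}$, i.e.\ $\nabla G=0$. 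Feeding this back into the display yields $\int_{\hat M}\varphi\,|\nabla^{\hat M}g|^2\,\dd\hat M\le 0$ for all $\varphi\ge 0$, whence $\nabla^{\hat M}g\equiv 0$; thus $g$ is constant on $\hat M$, and the vanishing fibre average forces $g\equiv 0$, i.e.\ $v\equiv c$.

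The remaining difficulties are regularity and admissibility issues rather than conceptual ones: justifying differentiation under the fibre integral so that $\nabla\bar v=\int_{\mathbb{T}}\nabla_M v\,\dd\theta$ and $\nabla G=\int_{\mathbb{T}}\nabla_M(g^2)\,\dd\theta$ lie in $L^2_{loc}(M_f)$, checking that the bounded, compactly supported $\Phi=(g+\|g\|_\infty)\varphi$ is an admissible test function (this is where local boundedness of weak subsolutions and the Remark allowing $\mathrm{Lip}_c$ test functions enter), and verifying that $e^{v}$ remains a weak Neumann subsolution. I would stress that the whole scheme invokes only the defining property of $f$-parabolicity, applied to the two auxiliary functions $\bar v$ and $G$; no capacity or exhaustion by cut-offs is used, which is precisely what makes it transplantable to the Dirichlet setting.
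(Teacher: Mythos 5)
Your argument is correct and is essentially the paper's own: both directions proceed by lifting along the fibre and, for the converse, by applying $f$-parabolicity first to the fibre average $\bar v$ and then to the (average of the) square of the function to force $|\hat\nabla v|^2\equiv 0$ — your variance $G=\overline{(v-c)^2}$ is exactly the paper's use of $u^2$, and your Caccioppoli-type test function $(g+\|g\|_\infty)\varphi$ is the weak-form justification the paper leaves implicit when it says the pointwise $C^2$ computation ``can be easily formulated in weak sense.'' The only cosmetic difference is the normalization (you use $e^{v}$, the paper translates, scales, and takes $\max\{u,0\}$), so no further comment is needed.
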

\begin{proof}
To illustrate the argument we are going use pointwise computations for $C^{2}$ functions which however can be easily formulated in weak sense for functions in $C^{0}\cap W^{1,2}_{loc}$. We also recall that the $f$-Laplacian $\Delta_{f}$ on $M$ is related to the Laplace-Beltrami operator $\hat\Delta$ of $\hat M$ by the formula
\[
\hat\Delta = \Delta_f + f^{-2} \Delta_{\mathbb{T}}.
\]
With this preparation, assume first that $\hat{M}$ is parabolic. We have to show that any (regular enough) solution of the problem
\[
\left\{
\begin{array}
[c]{ll}%
{\Delta}_{f}  u\geq0 & \text{on }\mathrm{int}{M}\\
\dfrac{\partial  u}{\partial {\nu}}\leq0 & \text{on }\partial {M}\\
\sup_{{M}} u<+\infty &
\end{array}
\right.
\]
must be constant. To this end, having selected $u$ we simply define $\hat u(x,t)=u(x)$, $(x,t)\in\hat{M}$, and we observe that $\hat u$ satisfies the analogous problem on $\hat M$. Since $\hat M$ is parabolic, $\hat u$ and hence $u$ must be constant, as required.

Conversely, assume that $M_f$ is $f$-parabolic and let $u$ be a (regular enough) solution of the problem
\begin{equation}
\label{subneumannproblem2}
\begin{array}
[c]{ccc}%
\left\{
\begin{array}
[c]{ll}%
\hat \Delta u\geq0 & \text{on }\mathrm{int} \hat{M}\\
\dfrac{\partial u}{\partial \hat\nu}\leq0 & \text{on }\partial \hat{M}=\partial M\times \mathbb{T}\\
\sup_{\hat M}u<+\infty. &
\end{array}
\right.
\end{array}
\end{equation}
By  translating and scaling we may assume that $\sup u=1 $ and since $\max\{u,0\}$ is again a solution of  \eqref{subneumannproblem2}, we may in fact assume that $0\leq u\leq 1$. Let
\[
\bar{u}(x) = \int_{\mathbb{T}} u(x,t)dt.
\]
Recalling that rotations in $\mathbb{T}$ are isometries of $\hat{M}$, and therefore commute with the Laplacian $\hat \Delta$, we deduce that
\[
\Delta_{f}  \bar u (x) = \hat \Delta \bar u (x)= \int_{\mathbb{T}} \hat \Delta u (x,t) dt \geq 0
\]
and
\[
\frac{\partial \bar u}{\partial \nu}\leq 0,
\]
so, by the assumed $f$-parabolicity of $M_f$, $\bar u$ is constant, and
\[
0=\Delta_f \bar{u}=\int_{\mathbb{T}} \hat \Delta u (x,t).
\]
Since $\hat \Delta u \geq 0$ we conclude that $\hat \Delta u=0$ in $\mathrm{int} \,\hat{M}$ . Applying the above argument to $u^2$, which is again a solution of \eqref{subneumannproblem2}, we obtain that
\[
0=\hat \Delta u^2 = 2 u \hat \Delta u + 2 |\hat \nabla u|^2 =  2 |\hat \nabla u|^2 \quad \text{ in } \mathrm{int} \, \hat{M},
\]
and therefore $u$ is constant, as required.
\end{proof}
\medskip

We are now ready to present the

\begin{proof}[Proof of Theorem \ref{thm_fAhlfors}]
Assume first that $M_f$ is $f$-parabolic. As a consequence of Proposition \ref{prop_equivfpar} this is equivalent to the parabolicity of $\hat{M}$ which, in turns, is equivalent to the validity of the following Ahlfors-type maximum principle (see \cite[Theorem 0.9]{IPS-Crelle}). For every domain $\hat{D}\subseteq \hat{M}$
with $\partial_{0}\hat{D}\neq\emptyset$ and for every $u\in C^{0}(\overline{\hat{D}})  \cap W_{loc}^{1,2}(  \hat{D})  $ satisfying
\begin{equation}\label{eq_ahlforsmhat}
\left\{
\begin{array}
[c]{ll}%
\hat{\Delta} u\geq0 & \text{on }\mathrm{int}\hat{D}\\
\dfrac{\partial u}{\partial\hat{\nu}}\leq0 & \text{on }\partial_{1}\hat{D}\\
\sup\limits_{\hat{D}}u<+\infty &
\end{array}
\right.
\end{equation}
in the weak sense, it holds%
\[
\sup_{\hat{D}}u=\sup_{\partial_{0}\hat{D}}u.
\]
Furthermore, in case $\hat{D}=\hat{M}$, for every $u\in C^{0}(\overline{\hat{D}})  \cap W_{loc}^{1,2}(  \hat{D})  $ satisfying
\[
\left\{
\begin{array}
[c]{ll}%
\hat{\Delta} u\geq0 & \text{on }\mathrm{int}\hat{M}\\
\sup\limits_{\hat{M}}u<+\infty &
\end{array}
\right.
\]
in the weak sense, it holds%
\[
\sup_{\hat{M}}u=\sup_{\partial_{0}\hat{M}}u.
\]
In particular, if $D\subset M$ is any smooth domain with $\partial_0 D\neq\emptyset$ and if $u\in C^{0}\left(
\overline{D}\right)  \cap W_{loc}^{1,2}\left(D\right)  $ satisfies
\[
\left\{
\begin{array}
[c]{ll}%
\Delta_f u\geq0 & \text{on }\mathrm{int}D\\
\dfrac{\partial u}{\partial\nu}\leq0 & \text{on }\partial_{1}D\\
\sup\limits_{D}u<+\infty &
\end{array}
\right.
\]
in the weak sense, then $\hat{u}(x,t)=u(x)$ is a solution of \eqref{eq_ahlforsmhat} on $\hat{D}\times\mathbb{T}$. Hence, the parabolicity $\hat{M}$ in the form of the Ahlfors-type maximum principle implies that
\[
\sup_Du=\sup_{\hat{D}}\hat{u}=\sup_{\partial_{0}\hat{D}=\partial_0D\times\mathbb{T}}\hat{u}=\sup_{\partial_0D}u.
\]
The same reasoning applies in case $D=M$.

Conversely, assume that for every domain $D\subseteq M$
with $\partial_{0}D\neq\emptyset$ and for every $u\in C^{0}\left(
\bar{D}\right)  \cap W_{loc}^{1,2}(  D_f)  $ satisfying
\[
\left\{
\begin{array}
[c]{ll}%
\Delta_f u\geq0 & \text{on }\mathrm{int}D\\
\dfrac{\partial u}{\partial\nu}\leq0 & \text{on }\partial_{1}D\\
\sup\limits_{D}u<+\infty &
\end{array}
\right.
\]
in the weak sense, it holds%
\[
\sup_{D}u=\sup_{\partial_{0}D}u.
\]
Suppose by contradiction that $M_f$ is not $f$-parabolic. Then there exists a non-constant function $v\in C^{0}\left(
M\right)  \cap W_{loc}^{1,2}\left(  M_f\right)  $ satisfying
\[
\left\{
\begin{array}
[c]{ll}%
\Delta_f v\geq0 & \text{on }\mathrm{int}M\\
\dfrac{\partial v}{\partial\nu}\leq0 & \text{on }\partial_{1}M\\
\sup\limits_{M}v<+\infty &
\end{array}
\right.
\]
in the weak sense. Given $\eta<\sup_M v$ consider the domain $D_{\eta}=\{x\in M:v(x)>\eta
\}\neq\emptyset$. We can choose $\eta$ sufficiently close to $\sup_M v$ in
such a way that $\mathrm{int}M\not \subseteq D_{\eta}$. In particular,
$\partial D_{\eta}\subseteq\left\{  v=\eta\right\}  $ and $\partial
_{0}D_{\eta}\neq\emptyset$. Now, $v\in C^{0}\left(  \overline{D
}_{\eta}\right)  \cap W_{loc}^{1,2}\left(  (D_{\eta})_f\right)  $ is a
bounded above weak Neumann subsolution of the $f$-Laplacian equation on $D_{\eta}$.
Moreover,
\[
\sup_{\partial_{0}D_{\eta}}{v}=\eta<\sup_{D_{\eta}}{v},
\]
contradicting our assumptions.
\end{proof}

From the geometric point of view, it can be proved that $f$--parabolicity is related to the growth rate of
the weighted volume of intrinsic metric objects. Indeed, exploiting a result due to A. Grigor'yan, \cite{Gr1}, one can prove the following result; see also Theorem 0.7 and Remark 0.8 in \cite{IPS-Crelle}). For the sake of clarity, we recall from the Introduction that the weighted volume of the metric ball $B^{M}_{R}(o) = \{ x\in M : \mathrm{dist}_{M}(x,o)<R\}$ of $M_{f}$ is defined by $\vol_{f}(B^{M}_{R}\!(o)) = \int_{B^{M}_{R}\!(o)} \dd M_{f}$.
\begin{proposition}\label{prop-fparab-volume}
Let $M_f$ be a complete weighted manifold
with boundary $\partial M\neq\emptyset$. If, for some reference point $o\in
M$,
\[
\mathrm{vol}_f (B_{R}^{M}\!(o) )
=\CO(R^2), \quad \mathrm{as}\quad R\rightarrow+\infty.
\]
then $M_f$ is $f$-parabolic.
\end{proposition}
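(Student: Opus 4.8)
The plan is to deduce the weighted volume test from the classical one by passing to the warped product $\hat M = M \times_{e^{-f}} \mathbb{T}$ introduced in Proposition~\ref{prop_equivfpar}. By that Proposition, $M_f$ is $f$-parabolic if and only if $\hat M$ is parabolic, so it suffices to show that the hypothesis $\vol_f(B^M_R(o)) = \CO(R^2)$ forces the ordinary (Neumann) parabolicity of $\hat M$. To this end I would invoke the volume growth criterion for parabolicity due to Grigor'yan \cite{Gr1}, in the form valid for complete manifolds with boundary established in \cite[Theorem 0.7 and Remark 0.8]{IPS-Crelle}: if a complete manifold $\hat M$ with boundary satisfies $\vol(B^{\hat M}_R(\hat o)) = \CO(R^2)$ for some $\hat o$, then $\hat M$ is parabolic. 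Thus the whole problem reduces to transferring the weighted quadratic volume bound on $M$ to an ordinary quadratic volume bound on $\hat M$.

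The key geometric observation is that the projection $\pi : \hat M \to M$, $\pi(x,\theta) = x$, is distance non-increasing. Indeed, for any curve $\gamma(t) = (c(t),\theta(t))$ in $\hat M$ one has
\[
\mathrm{length}_{\hat M}(\gamma) = \int \sqrt{|\dot c|^2_M + e^{-2f(c)}\,\dot\theta^{\,2}}\; \dd t \;\geq\; \int |\dot c|_M\, \dd t = \mathrm{length}_M(c),
\]
so that $\mathrm{dist}_{\hat M}\big((x,\theta),(y,\theta')\big) \geq \mathrm{dist}_M(x,y)$. Fixing $\hat o = (o,0)$ it follows that $B^{\hat M}_R(\hat o) \subseteq \pi^{-1}(B^M_R(o)) = B^M_R(o) \times \mathbb{T}$. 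Since the Riemannian volume element of the warped product is $\dd\hat M = e^{-f}\, \dd M\, \dd\theta$ and $\vol(\mathbb{T}) = 1$, integrating over the fibre gives
\[
\vol\big(B^{\hat M}_R(\hat o)\big) \;\leq\; \vol\big(B^M_R(o)\times \mathbb{T}\big) = \int_{B^M_R(o)} e^{-f}\, \dd M = \vol_f\big(B^M_R(o)\big) = \CO(R^2).
\]
Grigor'yan's criterion then yields the parabolicity of $\hat M$, and Proposition~\ref{prop_equivfpar} gives the $f$-parabolicity of $M_f$.

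Two routine points remain to be checked. First, one must observe that $\hat M$ is complete: this follows since $M$ is complete, the fibre $\mathbb{T}$ is compact, and the warping function $e^{-f}$ is smooth and positive, so the warped metric is complete. Second, one should confirm that the reference point is immaterial, which is clear because the rotations of $\mathbb{T}$ act isometrically on $\hat M$, so the $\theta$-coordinate of $\hat o$ plays no role. The main, and essentially the only, substantive step is the volume comparison above; the one delicate point there is that $B^{\hat M}_R$ need not coincide with the product $B^M_R\times\mathbb{T}$, which is precisely why the distance non-increasing property of $\pi$ is needed to obtain the one-sided inclusion $B^{\hat M}_R(\hat o)\subseteq B^M_R(o)\times\mathbb{T}$ that suffices for an \emph{upper} bound on the volume.
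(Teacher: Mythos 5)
Your argument is correct and follows essentially the same route as the paper: reduction to the warped product $\hat M = M\times_{e^{-f}}\mathbb{T}$ via Proposition~\ref{prop_equivfpar}, the inclusion $B^{\hat M}_R(\hat o)\subseteq B^M_R(o)\times\mathbb{T}$ obtained from the same length comparison, the Fubini identity equating $\vol(B^M_R(o)\times\mathbb{T})$ with $\vol_f(B^M_R(o))$, and Grigor'yan's volume criterion. The additional remarks on completeness of $\hat M$ and independence of the reference point are sensible and consistent with what the paper leaves implicit.
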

\begin{proof}
Set $\Omega_R:=B_{R}^{M}(o)\times \mathbb{T}$. Then, as a consequence of Fubini's Theorem,
\[
\vol_f(B_{R}^{M} \!(o))=\int_{B_{R}^{M}\!(o)}\dd M_f=\int_{\Omega_R}\dd \hat{M}=\vol(\Omega_R).
\]
Denote by $B_R^{\hat{M}}\!(\hat{o})$ the geodesic ball in $\hat{M}$ with reference point $\hat{o}=(o,\hat{t})$. Given an arbitrary point $\hat{x}=(x,t)\in\hat{M}$, let $\hat{\alpha}=(\alpha,\beta):[0,1]\rightarrow \hat{M}$ be a curve in $\hat{M}$ such that $\hat{\alpha}(0)=\hat{o}$ and $\hat{\alpha}(1)=\hat{x}$. Then
\begin{align*}
\ell(\hat{\alpha})=&\int_{0}^{1}\|\hat{\alpha}^{\prime}(s)\|\dd s\\
=&\int_{0}^{1}\sqrt{({\alpha}^{\prime}(s))^2+e^{-2 f(\alpha(s))}({\beta}^{\prime}(s))^2}\dd s\\
\geq &\int_{0}^{1}\|\alpha^{\prime}(s)\|\dd s\\
\geq & \textrm{dist}_M(x,o).
\end{align*}
Hence, as a consequence of the previous chain of inequalities, if $\hat{x}\in B_R^{\hat{M}}\!(\hat{o})$, then $x\in B_R^{M}\!(o)$, which in turns implies that
\[
B_R^{\hat{M}}\!(\hat{o})\subseteq \Omega_R.
\]
In particular,
\[
\vol(B_R^{\hat{M}}\!(\hat{o}))\leq \vol(\Omega_R)\leq C R^2
\]
for $R$ sufficiently large. The conclusion now follows  from the above mentioned result by Grigor'yan  and from the fact that, as proven in Proposition \ref{prop_equivfpar}, the parabolicity of $\hat{M}$ is equivalent to the $f$-parabolicity of $M_f$.
\end{proof}

\section{The height estimates}\label{HeightEst}
In this section we prove the main result of the paper, Theorem \ref{th_fheightestimate}, and show how the boundedness assumption can be dropped in dimension $2$; Corollary \ref{coro_fheightestimate}.
To this end, we need some preparation: first we derive some basic formulas concerning the weighted Laplacian of the height function and the angle function of the Killing graph; see Proposition \ref{prop-formulas}. Next we extend to the weighted setting and for Killing graphs a crucial volume estimate obtained in \cite{IPS-Crelle, LW}; see Lemma \ref{lemma_volume}. This estimate will allow us to use the global maximum principles introduced in Section \ref{PotTheory}. The proofs of Theorem \ref{th_fheightestimate} and of its Corollary \ref{coro_fheightestimate} will be provided in Section \ref{subsection-proof}.

\subsection{Some basic formulas}
This section aims to prove the following
\begin{proposition}\label{prop-formulas}
Let $( M, \g_{M})$ be a complete $(n+1)$-dimensional Riemannian manifold endowed with a complete Killing vector field $Y$ whose orthogonal
distribution has constant rank $n$ and is integrable. Let $(P,\g_{P})  $ be an integral leaf of that distribution  and let $\Sigma=\mathrm{Graph_{\Omega}}(u)$ be a Killing graph over a smooth domain $\Omega\subset P$, with upward unit normal $\CN$. Set $\psi=-\log|Y|$. Then, for any constant $C \in \R$ the following equations hold on $(\Sigma, \g)$:
\begin{eqnarray}
\label{eq-Deltafu}
\Delta^{\Sigma}_{C\psi}u & = & \left( nH+(C-2)\langle {\nabla}^{M} \psi, \CN \rangle_{M} \right)
e^{2\psi}\langle Y,\CN \rangle_{M},\\
\label{eq-DeltafAngle}
\Delta^{\Sigma}_{C\psi}\langle Y, \CN \rangle_{M} & = & -\langle Y, \CN\rangle_{M}
\left( |A|^2+\Ric^{M}_{C\psi}(\CN,\CN) \right)- nY^T(H_{C\psi}),
\end{eqnarray}
where $\Ric^{M}_{C\psi}$ is the Bakry-\'Emery Ricci tensor of the weighted manifold $M_{C\psi}$, $|A|$ is the norm of the second fundamental form of $\Sigma$ and $H$, $H_{C\psi}$ denote respectively the mean curvature of $\Sigma$ and its $C\psi$-weighted modified version.
\end{proposition}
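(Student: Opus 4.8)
The plan is to derive both formulas by computing the relevant Laplacians of the product ambient manifold and then specializing to the weighted setting, exploiting the graphical structure and the properties of the Killing field $Y$.

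First I would establish equation \eqref{eq-Deltafu}. The key observation is that the height function $u$ on $\Sigma$ can be related to the flow parameter, and its intrinsic Laplacian can be computed using the Killing graph parametrization. I would start from the fact that $Y$ is a Killing field, so that $\langle \nabla^M_Z Y, Z\rangle_M = 0$ for all $Z$, and use the decomposition of $Y$ into its tangential and normal parts along $\Sigma$, namely $Y = Y^T + \langle Y, \CN\rangle_M \, \CN$. The unweighted Laplacian $\Delta^\Sigma u$ is computed by relating $u$ to the signed distance along the flow lines and invoking the second fundamental form together with the mean curvature $H$; the term $\langle \nabla^M\psi, \CN\rangle_M$ enters precisely because $\psi = -\log|Y|$ controls how the fibre length element varies. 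Passing from $\Delta^\Sigma$ to $\Delta^\Sigma_{C\psi} = \Delta^\Sigma - \langle \nabla^\Sigma(C\psi), \nabla^\Sigma u\rangle_\Sigma$ introduces the correction term $-C\langle\nabla^\Sigma\psi,\nabla^\Sigma u\rangle_\Sigma$, which I would rewrite in terms of $\langle\nabla^M\psi,\CN\rangle_M$ and $\langle Y,\CN\rangle_M$ using \eqref{N}. Collecting the constant-independent $nH$ contribution together with the $(C-2)$ factor on the $\psi$-derivative term yields the stated right-hand side; the factor $e^{2\psi}\langle Y,\CN\rangle_M$ matches the normalization $\langle\CN,Y\rangle_M = 1/W$ from \eqref{NY}.

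Next I would prove the angle-function formula \eqref{eq-DeltafAngle}. The natural approach is to view $\langle Y,\CN\rangle_M$ as a support-type function and apply a weighted Simons-type identity. I would begin with the classical computation of $\Delta^\Sigma\langle Y,\CN\rangle_M$ for a Killing field: differentiating twice and using the Killing equation $\Hess^M$-identities produces the Jacobi-operator structure $-\langle Y,\CN\rangle_M(|A|^2 + \Ric^M(\CN,\CN)) - nY^T(H)$. Then, converting to the $C\psi$-weighted Laplacian and weighted mean curvature, the Hessian of $C\psi$ gets absorbed into the Bakry-Émery tensor $\Ric^M_{C\psi} = \Ric^M + \Hess^M(C\psi)$, while the derivative of the mean curvature upgrades to $Y^T(H_{C\psi})$ via the definition \eqref{Hpsi}. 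The bookkeeping here is to confirm that every occurrence of the weight combines to reproduce exactly the weighted Ricci tensor and the weighted mean curvature, with no spurious leftover terms.

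The main obstacle I expect is the second identity, specifically tracking the weight-dependent terms so that they reorganize cleanly into $\Ric^M_{C\psi}$ and $H_{C\psi}$. The delicate point is that differentiating the normal component $\langle Y,\CN\rangle_M$ brings in both the second fundamental form of $\Sigma$ and the ambient Hessian of $\psi$ evaluated on $\CN$; one must verify that the $\Hess^M(C\psi)(\CN,\CN)$ piece pairs with the geometric $\Ric^M(\CN,\CN)$ to form the Bakry-Émery tensor, while the tangential derivatives recombine into $nY^T(H_{C\psi})$ rather than $nY^T(H)$ plus uncontrolled remainders. A careful use of the Killing property of $Y$ — in particular that its flow preserves $\psi$ in the base direction, so $Y(\psi)$ behaves predictably — should be what makes these cancellations work, but checking them is where the real computational effort lies.
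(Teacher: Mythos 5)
Your plan follows the same route as the paper's proof: compute the unweighted Laplacians first, then add the drift term $-C\langle\nabla^{\Sigma}\psi,\nabla^{\Sigma}(\cdot)\rangle$ and reorganize. For \eqref{eq-Deltafu} the outline is sound and essentially complete once made quantitative: from $u=s|_{\Sigma}$ and $\nabla^{M}s=e^{2\psi}Y$ one gets $\nabla^{\Sigma}u=e^{2\psi}Y^{T}$, whence the Killing property and the definition of $H$ give $\Delta^{\Sigma}u=nHe^{2\psi}\langle Y,\CN\rangle_{M}+2\langle\nabla^{\Sigma}\psi,\nabla^{\Sigma}u\rangle$; since $Y(\psi)=0$ one has $\langle\nabla^{\Sigma}\psi,\nabla^{\Sigma}u\rangle=-e^{2\psi}\langle\nabla^{M}\psi,\CN\rangle_{M}\langle Y,\CN\rangle_{M}$, and subtracting $C$ times this quantity produces the coefficient $C-2$. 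This is exactly what the paper does.

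For \eqref{eq-DeltafAngle}, however, your proposal stops precisely where the proof begins. You correctly invoke the unweighted Jacobi-type identity $\Delta^{\Sigma}\langle Y,\CN\rangle_{M}=-\bigl(|A|^{2}+\Ric^{M}(\CN,\CN)\bigr)\langle Y,\CN\rangle_{M}-nY^{T}(H)$, but the assertion that every weight-dependent term recombines into $\Ric^{M}_{C\psi}$ and $H_{C\psi}$ ``with no spurious leftover terms'' is the entire content of the formula, and you explicitly defer its verification. What must be shown is that $-nY^{T}(H)=-nY^{T}(H_{C\psi})+C\,Y^{T}\langle\nabla^{M}\psi,\CN\rangle_{M}$ together with the identity
\begin{equation*}
Y^{T}\langle\nabla^{M}\psi,\CN\rangle_{M}=\langle\nabla^{\Sigma}\psi,\nabla^{\Sigma}\langle Y,\CN\rangle_{M}\rangle-\langle Y,\CN\rangle_{M}\,\Hess^{M}(\psi)(\CN,\CN).
\end{equation*}
This last identity is not a formality: the paper obtains it using (i) $\nabla^{M}_{Y}\nabla^{M}\psi=\nabla^{M}_{\nabla^{M}\psi}Y$, which holds because $\psi$ is independent of the flow parameter and $Y$ is Killing; (ii) the gradient formula $\nabla^{\Sigma}\langle Y,\CN\rangle_{M}=-\nabla^{M}_{\CN}Y-A_{\CN}Y^{T}$; and (iii) the symmetry of the shape operator to identify $\langle\nabla^{M}\psi,\nabla^{M}_{Y^{T}}\CN\rangle_{M}$ with $\langle\nabla^{\Sigma}\psi,(\nabla^{M}_{Y^{T}}\CN)^{T}\rangle$. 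Once it is in place, the term $C\langle\nabla^{\Sigma}\psi,\nabla^{\Sigma}\langle Y,\CN\rangle_{M}\rangle$ cancels exactly against the drift term of $\Delta^{\Sigma}_{C\psi}$, and the Hessian term joins $\Ric^{M}(\CN,\CN)$ to form $\Ric^{M}_{C\psi}(\CN,\CN)$. Without carrying out this computation the proof of \eqref{eq-DeltafAngle} is incomplete; with it, your argument coincides with the paper's.
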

\begin{proof}
Observe that
\[
u=s|_{\Sigma},
\]
where, we recall, $s$ is the flow parameter of $Y$. Using ${\nabla}s=e^{2\psi} Y$, we have
\[
\nabla^{\Sigma} u = \nabla^{\Sigma} s = e^{2\psi} Y^T.
\]
Thus, letting $\{e_i\}$ be an orthonormal basis of $T\Sigma$, and recalling that,
since $Y$ is Killing, $\langle \nabla^{M}_V Y, V\rangle_{M} = 0$ for every vector $V$, we compute
\begin{align*}
&  \Delta^{\Sigma}u=\sum_{i=1}^{n}\langle\nabla_{e_{i}}^{\Sigma}\nabla^{\Sigma}u,e_{i}\rangle\\
&\,\, = \sum_{i=1}^{n}\langle\nabla^{\Sigma}_{e_{i}} e^{2\psi}Y^{T},e_{i}\rangle\\
&\,\, = e^{2\psi}\sum_{i=1}^{n}\langle{\nabla}^{M}_{e_{i}}(Y-\langle Y,\CN \rangle_{M}
\CN),e_{i}\rangle_{M} +2\sum_{i=1}^{n}\langle e_{i},{\nabla^{\Sigma}}\psi\rangle\langle e_{i},{\nabla^{\Sigma}}s\rangle\\
&= e^{2\psi}\sum_{i=1}^{n}\langle{\nabla}^{M}_{e_{i}}Y,e_{i}\rangle_{M}
-\langle Y,\CN\rangle_{M} e^{2\psi}\sum_{i=1}^{n}\langle{\nabla}^{M}_{e_{i}} \CN,e_{i}\rangle_{M}
+2\sum_{i=1}^{n}\langle e_{i},\nabla^{\Sigma}\psi
\rangle\langle e_{i},\nabla^{\Sigma}u\rangle\\
&  \,\,=nHe^{2\psi}\langle Y,\CN \rangle_{M}+2\langle\nabla^{\Sigma}\psi,\nabla^{\Sigma}u\rangle.
\end{align*}
Equation \eqref{eq-Deltafu} follows since, by definition,
\[
\Delta_{C\psi}^{\Sigma}u= \Delta^\Sigma u- C\langle\nabla^\Sigma \psi, \nabla^\Sigma u\rangle.
\]

As for equation \eqref{eq-DeltafAngle}, note that, since $\CN$ is a unit normal and   $Y$ is a Killing vector field $\nabla^{M}_\CN Y$ is tangent to $\Sigma$  and,  for every vector $X$ tangent to $\Sigma$,
\[
\langle \nabla^{M}_X \CN, Y\rangle_{M} = \langle \nabla^{M}_X \CN, Y^T + \langle Y, \CN \rangle_{M} \CN \rangle_{M} =
\langle \nabla^{M}_X \CN, Y^T\rangle_{M} = -\langle A_{\CN}\,Y^T, X\rangle,
\]
so that
\begin{equation*}
\langle \nabla^\Sigma \langle Y,\CN \rangle_{M}, X\rangle
=
X \langle Y,\CN\rangle_{M} =
-\langle \nabla^{M}_\CN Y, X\rangle_{M} + \langle Y,  \nabla^{M}_{X} \CN\rangle_{M}
= - \langle \nabla^{M}_\CN Y + A_{\CN} \, Y^T, X\rangle_{M}
\end{equation*}
and therefore
\[
\nabla^{\Sigma}\langle Y, \CN \rangle_{M}=-{\nabla}^{M}_\CN Y- A_{\CN} Y^T.
\]
Moreover, using the Codazzi equations, it is not difficult to prove that
\[
\Delta^{\Sigma}\langle Y,\CN\rangle_{M}=-\bigl(|A|^{2}+{\Ric^{M}}(\CN,\CN)\bigr)\langle
Y, \CN \rangle_{M}-nY^T(H),
\]
see, e.g.,
\cite[Prop. 1]{Fornari-Ripoll-Illinois}.
 Using the definition of
$H_{C\psi}= H+\frac 1n \langle \nabla^{M}( C\psi), \CN\rangle_{M}$  we also have
\begin{align*}
-nY^T(H)
&=-nY^T(H_{C\psi})+CY^T\langle {\nabla}^{M} {\psi}, \CN\rangle_{M}\\
&=-nY^T(H_{C\psi})+C\langle{\nabla}^{M}_Y {\nabla}^{M} {\psi}, \CN \rangle_{M}
- C\langle Y,\CN\rangle_{M} {\Hess^{M}}(\psi)(\CN,\CN)+
C\langle{\nabla}^{M} {\psi},{\nabla}^{M}_{Y^T} \CN\rangle_{M}\\
&=-nY^T(H_{C\psi})+C\langle{\nabla}^{M}_{{\nabla} {\psi}} Y, \CN \rangle_{M}
-C\langle Y,\CN\rangle_{M} {\Hess^{M}}(\psi)(\CN,\CN)
+ C\langle{\nabla}^\Sigma {\psi},({\nabla}^{M}_{Y^T} \CN)^{T}\rangle\\
&=-nY^T(H_{C\psi})+C\langle\nabla^\Sigma {\psi},\nabla^\Sigma \langle Y, \CN \rangle_{M} \rangle
-C\langle Y,\CN\rangle_{M} {\Hess}^{M}(\psi)(\CN,\CN),
\end{align*}
where we have used that:\\
\begin{itemize}
 \item [-]  $\nabla^{M}_Y \nabla^{M}\psi = \nabla^{M}_{\nabla^{M} \psi}Y,$  since $\psi$ depends only on the $P$-variables and $Y=\partial_{s}$;\smallskip
 \item [-] $\langle \nabla^{M}_{\nabla^{M} \psi} Y, \CN \rangle_{M} = -\langle \nabla^{M}_\CN Y,\nabla^{M} \psi\rangle_{M}$ and $\langle \nabla^{M}_\CN Y, \CN \rangle_{M} =0$, since $Y$ is Killing;\smallskip;
  \item [-] $\langle\nabla^{M} \psi, \nabla^{M}_{Y^T} \CN \rangle_{M} = \langle \nabla^\Sigma \psi, (\nabla^{M}_{Y^T}\CN)^{T} \rangle$, since $\langle \CN, \nabla^{M}_{Y^T}\CN \rangle_{M}
 = \frac 12 Y^T\langle \CN, \CN \rangle_{M} = 0$;\smallskip
 \item [-] the identity
\begin{equation*}
\begin{split}
\nabla^\Sigma \psi\langle Y, \CN \rangle_{M}
&= \langle \nabla^{M}_{\nabla^\Sigma \psi} Y, \CN \rangle_{M}
+ \langle Y, \nabla^{M}_{\nabla^\Sigma \psi} \CN \rangle_{M}\\
&= \langle \nabla^{M}_{\nabla^{M} \psi} Y, \CN \rangle_{M} -
\langle \CN , \nabla^{M} \psi \rangle_{M} \langle \nabla^{M}_\CN Y, \CN\rangle_{M}\\
&\,\,\,\,\,\, +\langle Y^T , (\nabla_{\nabla^\Sigma \psi} \CN)^{T}\rangle
+ \langle \CN, Y \rangle_{M} \langle \CN, \nabla^{M}_{\nabla^\Sigma \psi} \CN \rangle_{M}\\
&= \langle \nabla^{M}_{\nabla^{M} \psi} Y, \CN \rangle_{M}
+\langle Y^T ,(\nabla^{M}_{\nabla^\Sigma \psi} \CN)^{T} \rangle\\
&=\langle \nabla^{M}_{\nabla^{M} \psi} Y, \CN \rangle_{M}
+\langle \nabla^\Sigma \psi ,(\nabla^{M}_{Y^T} \CN)^{T} \rangle.
\end{split}
\end{equation*}

\end{itemize}
Inserting the above identities into
\[
\Delta^\Sigma_{C\psi}\langle Y, \CN \rangle_{M} = \Delta^\Sigma \langle Y,\CN \rangle_{M} - C\langle \nabla^\Sigma \psi, \nabla^\Sigma \langle Y, \CN\rangle\rangle
\]
and recalling that
\[
\Ric^{M}_{C\psi}= \Ric^{M}+C\, \Hess^{M}(\psi)
\]
yield the validity of \eqref{eq-DeltafAngle}.
\end{proof}

\subsection{Weighted volume estimates}

In this section we extend to the weighted setting of Killing graphs, with
prescribed (weighted) mean curvature, an estimate of the extrinsic volume
originally obtained in \cite{LW} and later extended in \cite{IPS-Crelle}.

\begin{lemma} \label{lemma_volume}
Let $( M, \g_{M})$ be a complete $(n+1)$-dimensional Riemannian manifold endowed with a complete Killing vector field $Y$ whose orthogonal distribution has constant rank $n$ and is integrable.
Let $(P,\g_{P})  $ be an integral leaf of that distribution so that $M$ can be identified with
$P \times_{e^{-\psi}}\R$, where $\psi=-\log|Y|$.
Let $\Sigma=\mathrm{Graph_{ \Omega}}(u)$ be a Killing graph over a smooth domain $\Omega\subset P$, with mean curvature $H$ with respect to the upward unit normal $\CN$ and let $\pi:\Sigma\to P$ be the projection map. Then, for any $y_{0}=(  x_{0},u(x_0))  \in \Sigma$ and for every $R>0$,
\[
\pi(B_R^\Sigma(y_0))
\subseteq
\Omega_{R}\left(x_{0}\right)
\]
where we have set
\[
\Omega_{R}\left(  x_{0}\right)  =B_{R}^{P}(x_{0})\cap\Omega.
\]
Moreover, assume that given $D\in \R$,
\begin{equation}
A:=\sup_{\Omega}\left\vert u (x)  e^{-\psi (x)
}\right\vert +\sup_{\Omega}\left\vert H_{D\psi} (x)  \right\vert
<+\infty,\label{volume-u+Hf}.
\end{equation}
Then, there exists a constant $C>0$,
depending on $n$ and $A$, such that, for every $\delta,R>0$, the corresponding $D\psi$-volume of the intrinsic ball of $\Sigma$ satisfies
\begin{equation}\label{eq_volume}
\vol_{D\psi}B_{R}^{\Sigma}\left(  y_{0}\right)  \leq C\left(1+\frac{1}{\delta R}\right)  \vol_{D\psi}\left(  \Omega_{\left( 1+\delta\right)  R}\left(  x_{0}\right)  \right),
\end{equation}
where $y_{0}=(x_{0},u(x_{0}))\in\Sigma$ is a reference origin.
\end{lemma}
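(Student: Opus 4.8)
The plan is to prove the two assertions separately: the inclusion $\pi(B_R^\Sigma(y_0))\subseteq\Omega_R(x_0)$ is a soft metric comparison, while the volume bound \eqref{eq_volume} rests on a capillarity-type integration by parts on the base $P$, with all error terms matched to the weighted base volume. For the inclusion, I would differentiate the graph map: for $v\in T_xP$ one has $dX(v)=\vartheta_*v+\langle\nabla^P u,v\rangle_P\,Y$, and since $\vartheta_*v\perp Y$ and $|Y|^2=e^{-2\psi}$,
\[
|dX(v)|_\Sigma^2=|v|_P^2+e^{-2\psi}\langle\nabla^P u,v\rangle_P^2\geq|v|_P^2 .
\]
Thus the projection $\pi=X^{-1}:\Sigma\to\Omega$ is distance non-increasing, so any path realizing (up to $\varepsilon$) the intrinsic distance from $y_0$ to $y$ projects to a base path no longer than itself; hence $\mathrm{dist}_P(\pi(y),x_0)\leq\mathrm{dist}_\Sigma(y,y_0)$, and since $\pi(\Sigma)=\Omega$ this gives $\pi(B_R^\Sigma(y_0))\subseteq B_R^P(x_0)\cap\Omega=\Omega_R(x_0)$.

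For the volume estimate I would first reduce to the base. Since $\pi$ is a diffeomorphism and $\dd\Sigma_{D\psi}=W\,\dd P_{(D+1)\psi}$ by \eqref{volumelement}, I select a radial cutoff $\phi$ on $P$ with $\phi\equiv1$ on $B_R^P(x_0)$, $\phi\equiv0$ off $B_{(1+\delta)R}^P(x_0)$ and $|\nabla^P\phi|\leq1/(\delta R)$. By the inclusion just proved and $\phi\geq0$,
\[
\vol_{D\psi}(B_R^\Sigma(y_0))\leq\int_\Omega\phi\,W\,\dd P_{(D+1)\psi}.
\]
The crucial pointwise identity is $W=\dfrac{|\nabla^P u|^2}{W}+\dfrac{e^{2\psi}}{W}$, together with $\dfrac{e^{2\psi}}{W}\leq e^\psi$, which turns the ``vertical'' part of the volume into a genuine base term: using $e^\psi\,\dd P_{(D+1)\psi}=\dd P_{D\psi}$ and $\phi\leq1$ supported in $\Omega_{(1+\delta)R}(x_0)$,
\[
\int_\Omega\phi\,\frac{e^{2\psi}}{W}\,\dd P_{(D+1)\psi}\leq\int_\Omega\phi\,\dd P_{D\psi}\leq\vol_{D\psi}\big(\Omega_{(1+\delta)R}(x_0)\big).
\]

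The remaining ``tilt'' term is where the mean curvature enters. Writing $X:=\nabla^P u/W$, so that $|\nabla^P u|^2/W=\langle\nabla^P u,X\rangle_P$, the key is that the weight $(D+1)\psi$ is exactly the one for which the capillarity equation \eqref{capillary-2}--\eqref{capillary-3} gives $\div^P_{(D+1)\psi}X=nH_{D\psi}$. Integrating by parts the field $\phi\,u\,X$ and invoking the standing assumption $\partial\Sigma\subset P\times\{0\}$, i.e. $u|_{\partial\Omega}=0$, which annihilates the boundary integral over $\partial\Omega$, I obtain
\[
\int_\Omega\phi\,\langle\nabla^P u,X\rangle_P\,\dd P_{(D+1)\psi}=-n\int_\Omega\phi\,u\,H_{D\psi}\,\dd P_{(D+1)\psi}-\int_\Omega u\,\langle X,\nabla^P\phi\rangle_P\,\dd P_{(D+1)\psi}.
\]
Now $|u|=|ue^{-\psi}|\,e^\psi\leq A\,e^\psi$, $|H_{D\psi}|\leq A$, $|X|<1$, and again $e^\psi\,\dd P_{(D+1)\psi}=\dd P_{D\psi}$; hence the first integral is bounded by $nA^2\vol_{D\psi}(\Omega_{(1+\delta)R})$, while the second, supported where $\nabla^P\phi\neq0$, is bounded by $\frac{A}{\delta R}\vol_{D\psi}(\Omega_{(1+\delta)R})$, which is precisely the origin of the factor $1/(\delta R)$.

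Collecting the three contributions yields
\[
\vol_{D\psi}(B_R^\Sigma(y_0))\leq\Big(1+nA^2+\frac{A}{\delta R}\Big)\vol_{D\psi}\big(\Omega_{(1+\delta)R}(x_0)\big)\leq C\Big(1+\frac{1}{\delta R}\Big)\vol_{D\psi}\big(\Omega_{(1+\delta)R}(x_0)\big)
\]
with $C=1+nA^2+A$, depending only on $n$ and $A$. The main point of the argument -- and the step I expect to be delicate -- is the simultaneous bookkeeping of the three weights ($D\psi$ on $\Sigma$, $(D+1)\psi$ on $P$, and the induced weight on $\partial\Omega$): every error term collapses to $\vol_{D\psi}(\Omega_{(1+\delta)R})$ \emph{only} because the splitting of $W$ and the capillarity equation are both synchronized with the weight $(D+1)\psi$, and because the slice condition $u|_{\partial\Omega}=0$ removes the single boundary term that the volume hypothesis alone cannot control. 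The remaining issues -- justifying the integration by parts for $u\in W^{1,2}_{\mathrm{loc}}$ and the cutoff approximation -- are routine and I would treat them briefly.
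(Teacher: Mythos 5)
Your proposal is correct and follows essentially the same route as the paper: the same length comparison shows that $\pi$ is distance non-increasing, the same splitting $W=|\nabla^P u|^2/W+e^{2\psi}/W$ is used, and the same cutoff vector field $\rho\, u\, e^{-(D+1)\psi}\nabla^P u/W$ is integrated by parts against the capillarity equation synchronized with the weight $(D+1)\psi$, yielding the same constant. The one point where you are more explicit than the paper --- invoking $u|_{\partial\Omega}=0$ to annihilate the boundary contribution over $\partial\Omega\cap B_{(1+\delta)R}$ --- flags a hypothesis that is not listed in the Lemma itself but is equally needed (and passed over silently, via the claim that $Z$ has compact support in $\Omega_{(1+\delta)R}$) in the paper's own application of the divergence theorem.
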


\begin{proof}
The Riemannian metric of $M$ writes as $\g_{M}=\g_{P}+e^{-2\psi} \dd s \otimes \dd s$.
Let $y_{0}=(x_{0},u(x_{0}))$ and $y=(x,u(x))$ be points in $\Sigma$ connected by the curve $(\alpha(t), u(\alpha(t))$, where $\alpha(t)$, $t\in\lbrack0,1]$, is an
arbitrary path connecting $x_{0}$ and $x$ in $\Omega\subseteq P$. Writing
$s(t)=u(\alpha(t))$
we have
\begin{align*}
\int_{0}^{1}\left\{|\alpha^{\prime}\left(  t\right)  ^{2}+e^{-2\psi(\alpha(t))}s^{\prime}(t)^{2}\right\}^{\frac{1}{2}} dt &
\geq \int_{0}^{1}|\alpha^{\prime}(t)|dt \geq
d_{P}(x_{0},x).
\end{align*}
Thus, if $y\in B_{R}^{\Sigma}(y_{0})$ we deduce that $x\in\Omega
_{R}\left(  x_{0}\right)  $, proving the first half of the lemma.

Now we compute the volume of $B_{R}^{\Sigma}(y_{0})$.  Since $\pi(B_{R}^{\Sigma}(y_{0}))\subset\Omega_{R}\left(  x_{0}\right)  $ we have
\begin{align}
\mathrm{vol}_{D\psi}(B_{R}^{\Sigma}(y_{0})) &  =\int_{\pi(B_{R}^{\Sigma}(y_{0}))}\sqrt{e^{2\psi}+|\nabla^{P}u|^{2}}e^{-(D+1)\psi}\,dP\label{volume-1}\\
&  \leq\int_{\Omega_{R}\left(  x_{0}\right)  }
\sqrt{e^{2\psi}+|\nabla^{P}u|^{2}}e^{-(D+1)\psi}dP\nonumber\\
&  =\int_{\Omega_{R}\left(  x_{0}\right)  }
\frac{|\nabla^{P}u|^{2}}{\sqrt{e^{2\psi}+|\nabla^{P}u|^{2}}}e^{-(D+1)\psi}\,dP\nonumber\\
&  +\int_{\Omega_{R}\left(  x_{0}\right)  }\frac
{e^{2\psi}}{\sqrt{e^{2\psi(x)}+|\nabla^{P}u|^{2}}}e^{-(D+1)\psi}dP.\nonumber
\end{align}
We then consider the vector field
\[
Z=\rho u\frac{e^{-\left(D+1\right)  \psi}\nabla^{P}u}{\sqrt{e^{2\psi}
+|\nabla^{P}u|^{2}}},
\]
where the function $\rho$ is given by
\[
\rho(x)=
\begin{cases}
1 & \mathrm{on}\quad B_{R}(x_{0})\\
\frac{\left(  1+\delta\right)  R-r(x)}{\delta R} & \mathrm{on}\quad B_{\left(
1+\delta\right)  R}(x_{0})\backslash B_{R}(x_{0})\\
0 & \mathrm{elsewhere},
\end{cases}
\]
with $r(x) = \mathrm{dist}_{P}(x,x_{0})$.
Recalling equation \eqref{capillary-2} in the Introduction, i.e.,
\[
\div^{P}\left\{  \frac{\nabla^{P}u}{\sqrt{e^{2\psi}+\left\vert
\nabla^{P}u\right\vert ^{2}}}\right\}=nH+\frac{\left\langle
\nabla^{P}u,\nabla^{P}\psi\right\rangle }{\sqrt{e^{2\psi}+\left\vert \nabla
^{P}u\right\vert ^{2}}}
\]
we compute
\begin{align*}
\div Z  & =e^{-\left(  D+1\right)  \psi}\left\{  u\frac
{\left\langle \nabla^{P}\rho,\nabla^{P}u\right\rangle }{\sqrt{e^{2\psi}+\left\vert \nabla
^{P}u\right\vert ^{2}}}+\rho\frac{\left\vert \nabla
^{P}u\right\vert ^{2}}{\sqrt{e^{2\psi}+\left\vert \nabla
^{P}u\right\vert ^{2}}
}+n\rho uH_{D\psi}\right\}  \\
& =e^{-D\psi}\left\{  u e^{-\psi}\frac{\left\langle \nabla^{P}
\rho,\nabla^{P}u\right\rangle }{\sqrt{e^{2\psi}+\left\vert \nabla
^{P}u\right\vert ^{2}}}+\rho e^{-\psi}\frac{\left\vert \nabla^{P}u\right\vert ^{2}
}{\sqrt{e^{2\psi}+\left\vert \nabla
^{P}u\right\vert ^{2}}}+n\rho u e^{-\psi}H_{D\psi}\right\}  \\
& \geq e^{-D\psi}\left\{  -\left\vert u e^{-\psi}\right\vert
\left\vert \nabla^{P}\rho\right\vert +\rho e^{-\psi}
\frac{\left\vert \nabla^{P}u\right\vert ^{2}}{\sqrt{e^{2\psi}+\left\vert \nabla
^{P}u\right\vert ^{2}}}-n\rho\left\vert u e^{-\psi}\right\vert
\left\vert H_{D\psi}\right\vert \right\}  .
\end{align*}
Since $Z$ has compact support in $\Omega_{\left(  1+\delta\right)  R}$,
applying the divergence theorem and using the properties of $\rho$, from the
above inequality we obtain
\begin{align*}
\int_{\Omega_{R}\left(  x_{0}\right)  }\frac{\left\vert
\nabla^{P}u\right\vert ^{2}}{\sqrt{e^{2\psi}+\left\vert \nabla
^{P}u\right\vert ^{2}}}e^{-\psi}e^{-D\psi}\text{ }dP  & \leq\frac{1}{\delta R}\int_{\Omega_{\left(
1+\delta\right)  R}\left(  x_{0}\right)  }\left\vert u e^{-\psi}\right\vert
e^{-D\psi}\text{ }dP\\
& +n\int_{\Omega_{\left(  1+\delta\right)  R}\left(  x_{0}\right)  }
\left\vert u e^{-\psi}\right\vert \left\vert H_{D\psi}\right\vert
e^{-D\psi}\text{ }dP.
\end{align*}
Inserting this latter into (\ref{volume-1}) we get
\begin{align*}
\mathrm{vol}_{D\psi}(B_{R}^{\Sigma}(y_{0})) &  \leq\frac{1}{\delta R}
\int_{\Omega_{\left(  1+\delta\right)  R}\left(  x_{0}\right)  }
\left\vert u e^{-\psi}\right\vert e^{-D\psi}\text{ }dP\\
&  +n\int_{\Omega_{\left(  1+\delta\right)  R}\left(  x_{0}\right)  }
\left\vert u e^{-\psi}\right\vert \left\vert H_{D\psi}\right\vert
e^{-D\psi}\text{ }dP.
\end{align*}
To conclude the desired volume estimate, we now recall that, by assumption,
\[
\sup_{\Omega}\left\vert u e^{-\psi}\right\vert +\sup_{\Omega
}\left\vert H_{D\psi}\right\vert <+\infty.
\]
The proof of the Lemma is completed.
\end{proof}
\begin{remark}\label{rmk-fvolumes}
\rm{
We note for further use that if, in the previous Lemma, we assume that $\inf_P\psi>-\infty$, then the following more general inequality holds:
\[
\vol_{C\psi} B_{R}^{\Sigma} (y_0)  \leq A \vol_{D\psi}(\Omega_R(x_0))  ,
\]
for any constant $C>D$ and any $R \gg 1$.
}
\end{remark}

\subsection{Proofs of Theorem \ref{th_fheightestimate} and Corollary \ref{coro_fheightestimate}}\label{subsection-proof}

We are now in the position to give the
\begin{proof}[Proof (of Theorem \ref{th_fheightestimate})]
Since $H_{\psi} \equiv \mathrm{const}$, it follows by equation \eqref{eq-Deltafu} that
\begin{align*}
\Delta^{\Sigma}_{\psi}(H_{\psi} u)
&= nH_{\psi}He^{2\psi}\langle Y,\CN \rangle_{M}-
H_{\psi}e^{2\psi}\langle \nabla^{M} {\psi}, \CN \rangle_{M} \langle Y, \CN \rangle_{M}\\
&= nH^2e^{2\psi}\langle Y, \CN \rangle_{M} + He^{2\psi} \langle \nabla^{M} {\psi}, \CN \rangle_{M} \langle Y, \CN \rangle_{M}\\
&\,\,\,\,\, -He^{2\psi}\langle \nabla^{M} {\psi}, \CN \rangle_{M} \langle Y,\CN \rangle_{M}
-\frac1n e^{2\psi} \langle \nabla^{M} {\psi}, \CN \rangle_{M}^{2} \langle Y, \CN \rangle_{M}\\
&\leq nH^2e^{2\psi}\langle Y, \CN \rangle_{M}.
\end{align*}
Combining this inequality with equation \eqref{eq-DeltafAngle}, it is straightforward to prove that, under our assumptions on $\Ric^{M}_{\psi}$ and $H_{\psi}$, the function
\[
\varphi (x) =H_{\psi}u(x) e^{-2\sup_\Omega \psi}+\langle Y_{x}, \CN_{x} \rangle_{M}
\]
satisfies
\[
\Delta^{\Sigma}_{ \psi} \varphi  \leq0\text{ on }\Sigma.
\]
On the other hand, using assumptions (b) and (e) we can apply Lemma \ref{lemma_volume} to deduce that
\[
\vol_{\psi}(B_{R}^{\Sigma})=\mathcal{O}(R^2)  \text{, as }
R\rightarrow+\infty.
\]
In particular, by Proposition \ref{prop-fparab-volume}, $\Sigma$ is parabolic with respect to the weighted Laplacian $\Delta^{\Sigma}_{\psi}$.
Since, again by (e), $\varphi$ is a bounded function and, according to (d), $u\equiv0$ on
$\partial\Omega$, an application of the Ahlfors maximum principle stated in Theorem \ref{thm_fAhlfors} gives
\[
\inf_{\Omega}\left( H_{\psi}u e^{-2\sup_{\Omega} \psi}+\langle Y, \CN \rangle_{M}\right)
=\inf_{\partial\Omega}  \langle Y, \CN \rangle_{M} \geq0.
\]
Combining this latter with the fact that $\langle Y, \CN \rangle_{M} \leq e^{-\psi}$ we get
the desired upper estimate on $u$.

Finally, note that equation \eqref{eq-Deltafu} can also be written in the form:
\[
\Delta^{\Sigma}_{3\psi} u=H_{\psi} e^{2\psi}\langle Y, \CN \rangle_{M}.
\]
Since, according to Remark \ref{rmk-fvolumes}, $\Sigma$ is also parabolic with respect to the weighted laplacian $\Delta^{\Sigma}_{3\psi}$ and $u$ is a bounded $3\psi$-superharmonic function,
the desired lower estimate on $u$ follows again as an application of Theorem \ref{thm_fAhlfors}.
\end{proof}

\begin{remark} \label{RmkHLR}
{
\rm  It is clear from the proof that if we consider, as in \cite{HLR-TAMS}, the function $\varphi=cH_\psi e^{-2\psi} u +\langle Y,\CN\rangle_M$, for some $0<c\leq 1$, it is possible to extend the height estimate to the case where
$\Ric_\psi^M\geq -G^2(x)$ in a neighborhood of $\Omega\times \R$, provided $(1-c)H^2(x)/n\geq G^2(x)$. The resulting estimate becomes
\[
0\leq ue^{-\psi}\leq \frac{e^{2(\sup_\Omega \psi-\inf_\Omega\psi)}}{c|H_\psi|}.
\]
}
\end{remark}

To conclude this section, we show how the boundedness assumption (e) can be dropped in the case of $2$-dimensional graphs.

\begin{proof}[Proof (of Corollary \ref{coro_fheightestimate})]
Let $\Sigma$ be a $2$-dimensional Killing graph over a domain $\Omega\subset P$ of constant weighted mean curvature $H_\psi<0$. Recall that the {\it Perelman scalar curvature} of $R_\psi$ of $M=P\times_{e^{-\psi}}\R $ is defined by
\[
R^{M}_\psi=R+2 \Delta \psi-|\nabla \psi|^2=\mathrm{Tr}(\Ric^{M}_\psi)+\Delta_\psi\psi.
\]
Assume that $R^{M}_{\psi} \geq 0$. Then, as a consequence of a result due to Espinar \cite[Theorem 4.2]{Esp}, for every $y=(x,u(x))\in\Sigma$ it holds
\[
|u(x)e^{-\psi}|\leq\mathrm{dist}(y,\partial \Sigma)\leq C(|H_\psi|)< +\infty.
\]
On the other hand, a straightforward calculation shows that
\[
\Delta_\psi \psi=e^{2\psi}\Ric^{M}_\psi(Y,Y).
\]
Hence, $R^{M}_{\psi} \geq 0$ provided $\Ric^{M}_{\psi} \geq 0$. Putting everything together, it follows that  condition (e) in Theorem \ref{th_fheightestimate} is automatically satisfied.
\end{proof}


\section{Height estimates in model manifolds}\label{section-examples}

In this section we construct rotationally symmetric examples of  Killing graphs with constant
weighted mean curvature and  exhibit explicit estimates on the maximum of
their weighted height in terms of the weighted mean curvature. When the base space is
$\R ^{n}$ and the Killing vector field has constant length $1$ (hence
the ambient space is $\R ^{n}\times\R $) these graphs are
standard half-spheres and the estimate on their maximal height is precisely
the reciprocal of the mean curvature; \cite{He, Se}.


We shall assume that the induced metric $\g_{P}$ on $P$ is rotationally
invariant. More precisely, we suppose that $P$ is a model space with pole at
$o$ and Gaussian coordinates $(r,\theta)\in\left(  0,R\right)  \times
\mathbb{S}^{n-1}$, $R\in(0,+\infty],$ in terms of which $g_{P}$ is expressed
by
\[
g_{P}=dr^{2}+\xi^{2}(r)d\theta^{2},
\]
for some $\xi\in C^{\infty}([0,R))$ satisfying 
\[
\begin{cases}
& \xi>0  \text{ on }\left(  0,R\right) \\
& \xi^{(2k)}\left(  0\right)  =0, \,\, k\in \mathbb{N}  \\
& \xi^{\prime}\left(  0\right)   =1
\end{cases}
\]
and where $d\theta^{2}$ denotes the usual metric in $\mathbb{S}^{n-1}$. We
also assume that the norm of the Killing field does not depend on $\theta$, so that
\[
|Y|^{2}=e^{-2\psi(r)},
\]
In this case, the ambient metric $\g_{M}$ of $M=P\times_{e^{-\psi}}\R $ is
written in terms of cylindrical coordinates $(s,r,\theta)$ as
\[
\g_{M}=e^{-2\psi(r)}ds^{2}+dr^{2}+\xi^{2}(r)d\theta^{2}.
\]
and $M$ is a doubly-warped product with respect to warping functions of the
coordinate $r$. The smoothness of $\psi$ implies that  the pole $o$ is a critical point for $e^{-\psi}$,
namely 
\[
\frac{d e^{-\psi(r)}}{dr}\left(  0\right)= -e^{-\psi(r)}\frac{d \psi(r)}{dr}=0.
\]
A rotationally invariant Killing graph $\Sigma_{0}\subset M$ is defined by a function $u$ that depends only on the radial coordinate $r$. In this case (\ref{capillary-3}) becomes
\begin{equation}
\label{H-ode}
\bigg(\frac{u'(r)}{W}\bigg)' + \frac{u'}{W} \big(\Delta_P r - \langle \nabla^P \psi, \nabla^P r\rangle\big)= nH,
\end{equation}
where
\[
W = \sqrt{e^{2\psi(r)}+u'^2(r)}
\]
and $'$ denotes derivatives with respect to $r$. Note that the weighted Laplacian of  $r$ is given by
\[
\Delta_P r- \langle \nabla^P \psi, \nabla^P r\rangle = -\psi'(r) + (n-1) \frac{\xi'(r)}{\xi(r)} = \frac{|Y|'(r)}{|Y|(r)} + (n-1) \frac{\xi'(r)}{\xi(r)},
\]
which is, up to a factor $1/n$, the mean curvature $H_{{\rm cyl}}(r)$ of the cylinder over the geodesic sphere of radius $r$ centered at $o$ and ruled by the flow lines of $Y$ over that sphere. We also have
\begin{equation}
\label{H-Hpsi}
nH_{\psi} = nH -\frac{u'(r)}{W}\langle\nabla^P \psi, \nabla^P r\rangle = nH -\psi'(r)\frac{u'(r)}{W} = {\rm div}^P_{2\psi}\bigg(\frac{u'(r)}{W}\nabla^P r\bigg)\cdot
\end{equation}
It follows from (\ref{H-ode}) that both $H$ and $H_{\psi}$ depend only on $r$. Integrating both extremes in (\ref{H-Hpsi}) against the weighted measure ${\rm d}P_\psi = e^{-\psi} {\rm d}P$ one obtains in this particular setting a first-order equation involving $u(r)$, namely
\begin{equation}
\label{H-rot-flux}
\frac{u'(r)}{W} e^{-2\psi(r)}\xi^{n-1}(r) = \int_0^r nH_\psi e^{-2\psi(\tau)} \xi^{n-1}(\tau)\,{\rm d}\tau.
\end{equation}
Denoting the right hand side in (\ref{H-rot-flux}) by $I(r)$ and solving it for $u'(r)$ yields
\begin{equation}
\label{u-prima}
u'^2= e^{2\psi} \frac{I^2 }{e^{-4\psi}\xi^{2(n-1)}-I^2}
\end{equation}
We assume that $u'(r)\le 0$ and denote
\begin{equation}
V_\psi(r) = \frac{1}{|\mathbb{S}^{n-1}|}{\rm vol}_\psi (B_r(o)) =\int_0^r e^{-2\psi(\tau)}\xi^{n-1}(\tau)  \, {\rm d}\tau 
\end{equation}
 and
\[
A_\psi(r) = \frac{1}{|\mathbb{S}^{n-1}|}{\rm vol}_\psi (\partial B_r(o)) = e^{-2\psi(r)}\xi^{n-1}(r).
\]

Fixed this geometric setting, we  prove the existence of compact rotationally symmetric Killing graphs with constant weighted mean curvature.
\begin{theorem}
\label{H-rot-existence} Suppose that the ratio $\frac{A_\psi(r)}{V_\psi(r)}$ is non-increasing for $r\in (0,R)$. Let $H_0$ be a constant with
\begin{equation}
\label{isop-ineq}
n|H_0| =  \frac{A_\psi(r_0)}{V_\psi(r_0)} 
\end{equation}
for some $r_0\in (0,R)$.
Then there exists a compact rotationally symmetric Killing graph $\Sigma_0\subset P\times_{e^{-\psi}}\mathbb{R}$ of a radial function $u(r)$, $r\in [0,r_0]$, given by
\begin{equation}
\label{u-solution}
u(r) = \int^r_{r_0} e^{\psi(\tau)} \frac{I(\tau)}{\sqrt{e^{-4\psi(\tau)}\xi^{2(n-1)}(\tau)-I^2(\tau)}} \, {\rm d}\tau.
\end{equation}
with constant weighted mean curvature $H_\psi=H_0$ and boundary $\partial B_{r_0}(o)\subset P$. The weighted height function in this graph is bounded as follows
\begin{equation}
\label{u-estimate}
e^{-\psi(r)} u(r) \le 
e^{\sup_{B_r(o)} \psi-\inf_{B_r(o)}\psi}\int_{0}^{r_{0}}\frac{-nH_0}{\sqrt{\frac{A^2_\psi(\tau)}{V^2_\psi(\tau)}-n^2 H^2_0}}d\tau.
\end{equation}
\end{theorem}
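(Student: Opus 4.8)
The plan is to produce the graph by \emph{reversing} the integrations that led from the capillarity equation \eqref{capillary-3} to the explicit formula \eqref{u-solution}. First I would impose $H_\psi\equiv H_0$ in the first-order flux identity \eqref{H-rot-flux}, which gives at once
\[
I(r)=nH_0\int_0^r e^{-2\psi(\tau)}\xi^{n-1}(\tau)\,\dd\tau=nH_0\,V_\psi(r).
\]
Substituting this into the radicand of \eqref{u-prima} and factoring,
\[
e^{-4\psi}\xi^{2(n-1)}-I^2=A_\psi^2-n^2H_0^2V_\psi^2=V_\psi^2\Big(\frac{A_\psi^2}{V_\psi^2}-n^2H_0^2\Big),
\]
I would invoke the normalization \eqref{isop-ineq} together with the hypothesis that $A_\psi/V_\psi$ is non-increasing to conclude that $A_\psi(r)/V_\psi(r)\ge A_\psi(r_0)/V_\psi(r_0)=n|H_0|$ on $(0,r_0)$, with equality only at $r_0$. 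Hence the radicand is strictly positive on $(0,r_0)$ and vanishes at $r_0$, so the negative square root in \eqref{u-prima} (the one consistent with $H_0<0$, whence $u'\le 0$) defines a smooth negative function on $(0,r_0)$, and $u$ is given by \eqref{u-solution}. That this $u$ indeed has weighted mean curvature $H_\psi\equiv H_0$ is then a direct verification: from \eqref{u-prima} one computes $W=e^{\psi}A_\psi/\sqrt{A_\psi^2-I^2}$, so $u'/W=I/A_\psi$ and therefore $(u'/W)e^{-2\psi}\xi^{n-1}=I(r)=nH_0V_\psi(r)$, which is exactly \eqref{H-rot-flux}; differentiating recovers \eqref{capillary-3} with constant weighted mean curvature $H_0$.

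Next I would settle regularity and compactness. Near the pole, using $\xi(r)\sim r$, $\psi'(0)=0$, $V_\psi(r)\sim\tfrac1n e^{-2\psi(0)}r^{n}$ and $A_\psi(r)\sim e^{-2\psi(0)}r^{n-1}$, one finds $u'(r)=e^{\psi}I/\sqrt{A_\psi^2-I^2}\sim e^{\psi(0)}H_0\,r\to 0$ linearly, so $u$ extends across $o$ as a smooth even radial function. At the outer radius, writing $\phi=A_\psi/V_\psi$, the factorization $A_\psi^2-I^2=V_\psi^2(\phi-n|H_0|)(\phi+n|H_0|)$ together with $\phi(r_0)=n|H_0|$ and $\phi'(r_0)<0$ shows $A_\psi^2-I^2\sim c\,(r_0-r)$ for some $c>0$, whence $u'\sim\mathrm{const}\cdot(r_0-r)^{-1/2}$ has an integrable singularity. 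Consequently the integral \eqref{u-solution} converges, $u$ extends continuously up to $r_0$ with $u(r_0)=0$, the boundary $\partial B_{r_0}(o)$ lies in the slice $P\times\{0\}$, and $\Sigma_0=\mathrm{Graph}_{\bar B_{r_0}(o)}(u)$ is a compact Killing graph meeting the slice vertically along its boundary, exactly as a spherical cap does.

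For the height bound \eqref{u-estimate} I would rewrite \eqref{u-solution} for $r<r_0$ by reversing the limits and inserting $I=nH_0V_\psi$, so that after cancelling $V_\psi$ inside the square root,
\[
e^{-\psi(r)}u(r)=\int_r^{r_0}e^{\psi(\tau)-\psi(r)}\,\frac{-nH_0}{\sqrt{\dfrac{A_\psi^2(\tau)}{V_\psi^2(\tau)}-n^2H_0^2}}\,\dd\tau .
\]
Since $H_0<0$ the integrand is nonnegative; I would then bound the oscillation factor by $e^{\psi(\tau)-\psi(r)}\le e^{\sup_{B_{r_0}(o)}\psi-\inf_{B_{r_0}(o)}\psi}$ for all $0\le r\le\tau\le r_0$ and enlarge the interval of integration from $[r,r_0]$ to $[0,r_0]$, which yields \eqref{u-estimate}.

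I expect the main obstacle to be the behaviour at the outer boundary $r=r_0$: establishing that the integral \eqref{u-solution} converges there, equivalently that $A_\psi^2-I^2$ vanishes to exactly first order, and hence that $\Sigma_0$ is a bona fide compact graph with boundary on the slice. This is precisely the point where one needs $A_\psi/V_\psi$ to be \emph{strictly} decreasing near $r_0$ (so that $\phi'(r_0)<0$), rather than merely non-increasing; were $A_\psi/V_\psi$ locally constant at the level $n|H_0|$, the radicand would vanish on a whole interval and the construction would collapse. A short non-degeneracy remark addressing this is therefore part of the plan.
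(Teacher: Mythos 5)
Your construction and your proof of the height bound coincide with the paper's: both impose $H_\psi\equiv H_0$ in the flux identity to get $I=nH_0V_\psi$, use the monotonicity of $A_\psi/V_\psi$ together with the normalization at $r_0$ to make the radicand in \eqref{u-prima} non-negative on $(0,r_0]$, and obtain the estimate by extracting the oscillation factor $e^{\psi(\tau)-\psi(r)}$ and enlarging the interval of integration to $[0,r_0]$. Where you genuinely diverge is the regularity and compactness analysis. You expand $u'$ directly at the pole and at $r=r_0$; the paper instead reparametrizes the generating curve by arc length, exhibits $\Sigma_0$ as the solution of the regular first-order system $\dot r=\cos\phi$, $e^{-\psi}\dot s=\sin\phi$, $\dot\phi=-nH_0-\sin\phi\,\bigl(2|Y|'/|Y|+(n-1)\xi'/\xi\bigr)$ with $\phi=\pi/2$ at $r_0$, and uses L'H\^opital at $r=0$ to see $\dot s\to 0$ there. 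The ODE viewpoint buys smoothness across the axis and verticality at the boundary without ever confronting the blow-up of $u'$; your direct expansion buys the explicit rate $u'\sim(r_0-r)^{-1/2}$, but, as you note, only under the extra hypothesis that $A_\psi/V_\psi$ has strictly negative derivative at $r_0$. Your closing caveat is well taken and in fact applies to the paper too: the stated hypotheses (ratio merely non-increasing) do not exclude the radicand vanishing to order $\ge 2$ at $r_0$, in which case the integral defining $u$ diverges, the boundary circle sits at infinite intrinsic distance, and no compact graph results; the paper's proof does not address this, asserting well-definedness from non-negativity of the radicand alone. So the non-degeneracy remark you propose is not a gap in your argument relative to the source --- it is a point where you are being more careful than the paper.
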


\begin{proof} Since $A_\psi(r)/V_\psi(r)$ is non-increasing it follows from (\ref{isop-ineq}) that 
\[
e^{-4\psi(r)}\xi^{2(n-1)}(r)-I^2(r) \ge A_\psi^2(r) - n^2 H^2_\psi V_\psi^2(r)\ge 0
\]
for $r\in (0,r_0]$ what guarantees that the expression
\begin{equation}
\label{u-rot}
u(r) = \int_{r_0}^{r} e^{\psi(\tau)} \frac{I(\tau)}{\sqrt{e^{-4\psi(\tau)}\xi^{2(n-1)}(\tau)-I^2(\tau)}} \, {\rm d}\tau
\end{equation}
is well-defined for $r\in [0,r_0]$ with $u'(r)\le 0$. For further reference, we remark an application of L'H\^opital's rule shows that 
\begin{equation}
\label{h-cyl}
\lim_{r\to 0} \frac{A_\psi(r)}{V_\psi(r)}  =\lim_{r\to 0} \bigg(2\frac{|Y|'(r)}{|Y|(r)} + (n-1) \frac{\xi'(r)}{\xi(r)}\bigg)\cdot
\end{equation}
In order to get a more detailed analysis at $r=r_0$ we consider a parametrization of $\Sigma_0$ in terms of cylindrical coordinates as
\[
(t,\theta) \mapsto (r(t), s(t), \theta),
\]
where $t$ is the arc-lenght parameter defined by
\[
\dot r^2 (t)+ e^{-2\psi(r(t))} \dot s^2(t) =1
\]
and $\cdot$ denotes derivatives with respect to $s$. Since 
\[
u'(r(t)) = \frac{\dot s(t)}{\dot r(t)}
\]
we impose $\dot s\ge 0$ and $\dot r\le 0$.
Hence $W=-e^{\psi}/\dot r$
whenever $\dot r(s)\neq 0$ and  (\ref{H-rot-flux}) is written as
\begin{equation}
\label{flux-param}
\dot s  e^{-3\psi(r)}\xi^{n-1}(r) = -\int_0^r nH_\psi e^{-2\psi(\tau)} \xi^{n-1}(\tau)\,{\rm d}\tau
\end{equation}
Since $\xi(0)=0$, $\xi'(0)=1$ and  $\psi'(0)=0$, applying L'H\^{o}pital's rule as above shows that
\[
\lim_{r\to 0}\frac{\int_0^r nH_\psi e^{-2\psi(\tau)} \xi^{n-1}(\tau)\,{\rm d}\tau}{e^{-3\psi(r)}\xi^{n-1}(r)} = \lim_{r\to 0}\frac{nH_\psi e^{\psi(r)}}{-3\psi'(r) + (n-1)\frac{\xi'(r)}{\xi(r)}} =0
\]
and we conclude that $e^{-\psi(r)}\dot s\to 0$ as $r\to 0$. Therefore $\dot s \to 0$ as $r\to 0$ what implies that $\Sigma_0$ is smooth at its intersection with the vertical axis of revolution. 

Now, we have from (\ref{flux-param}) and (\ref{isop-ineq}) that $\dot r =0$ at $r=r_0$ since $e^{-\psi(r_0)}\dot s(r_0) =1$. Finally, let $\phi$ be the angle between a meridian $\theta={\rm cte.}$ in $\Sigma_0$ and  and radial vector field $\partial_r$. We have
\[
\frac{u'(r)}{W} = -\frac{\dot s}{e^{\psi(r)}} = -\sin\phi.
\]
Hence
\begin{eqnarray*}
\bigg(\frac{u'(r)}{W} \bigg)' = -\cos\phi \,\dot \phi (t)\frac{1}{\dot r(t)} = -\dot \phi(t).
\end{eqnarray*}
On the other hand
\begin{eqnarray*}
\bigg(\frac{u'(r)}{W} \bigg)'  &= & nH - \frac{u'(r)}{W} \bigg(\frac{|Y|'(r)}{|Y|(r)}+(n-1)\frac{\xi'(r)}{\xi(r)}\bigg)\\
& = & nH_\psi + \sin \phi  \bigg(2\frac{|Y|'(r)}{|Y|(r)}+(n-1)\frac{\xi'(r)}{\xi(r)}\bigg)
\end{eqnarray*}
In sum, $\Sigma_0$ is parameterized by the solution of the first order system
\begin{equation}
\begin{cases}
\dot r & = \cos \phi \\
e^{-\psi (r)}\dot s & =\sin \phi\\
\dot \phi & =  -nH_0 - \sin \phi  \Big(2\frac{|Y|'(r)}{|Y|(r)}+(n-1)\frac{\xi'(r)}{\xi(r)}\Big),
\end{cases}
\end{equation}
with initial conditions $r(0) = r_0, s(0)=0, \phi(0)= \frac{\pi}{2}$. The height estimate follows directly from (\ref{u-solution}). This finishes the proof. \end{proof}



It is worth to point out that, in the classical situation of the Euclidean space where $e^{-\psi}=|Y|\equiv1$ and $\xi\left(  r\right)  =r$, the Killing graph defined by
$u(r)$ reduces to the standard sphere and (\ref{u-estimate})
gives rise to the expected sharp bound 
\[
u(r)  \leq \frac{1}{|H|}.
\]
Actually, a similar conclusion can be achieved if we choose $\psi\left(
r\right)  $ in such a way that 
\[
e^{-2\psi\left(  r\right)}  \xi^{n-1}\left(  r\right)  =r^{n-1}.
\]
Note that this choice is possible and compatible with the request
$d\psi/dr\left(  0\right)  =0$ because $\xi\left(  r\right)  $ is odd at
the origin.  This choice corresponds to the case when
\[
\frac{A_\psi(r)}{V_\psi(r)} = \frac{n}{r}
\]
as in the Euclidean space.
We have thus obtained the following height estimate

\begin{corollary}
\label{th_symm-example}Let $P=[0,R)\times_{\xi}\mathbb{S}^{n-1}$ be an
$n$-dimensional model manifold with warping function $\xi$ and let
$\psi:\left[0,R\right)  \rightarrow\R _{>0}$ be the smooth, even
function defined by 
\[
\psi(r)  =c\cdot
\begin{cases}
 \frac{(n-1)}{2}\log\frac{\xi (r)}{r} & r\neq0\\
1 & r=0,
\end{cases}
\]
where $c>0$ is a given constant. Fix $0<r_{0}<R$, let $H_{0}=-1/r_{0}$ and
define 
\[
u(r) =\int^{r_0}_{r}\frac{-H_{0}\tau}{\sqrt{1-H_0^2
\tau^{2}}}e^{\psi( \tau)} d\tau.
\]
Then, in the ambient manifold $M=P\times_{e^{-\psi}}\R $, the Killing graph of 
$u$ over $\Omega=B_{r_{0}}^{P}(
o) \subset P$ has  constant weighted mean curvature $H_\psi=H_0$ with respect to the upward pointing Gauss map.
Moreover, 
\[
0\leq e^{-\psi(r)}u(r) \leq\frac
{\max_{\lbrack0,r_{0}]}e^{-\psi}}{\min_{[0,r_{0}]}e^{-\psi}}\cdot
\frac{1}{|H_0|}=e^{\sup_\Omega \psi-\inf_\Omega\psi}\frac{1}{|H_0|}.
\]
\end{corollary}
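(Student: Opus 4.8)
The plan is to read the Corollary off as the specialization of Theorem~\ref{H-rot-existence} to the weight that renders the base $P_\psi$ isoperimetrically Euclidean, and then to obtain the height bound by a direct estimate of the explicit profile. The decisive property of the function $\psi$ in the statement is that it is tailored so that $e^{-2\psi(r)}\xi^{n-1}(r)=r^{n-1}$, as emphasized in the paragraph preceding the Corollary; equivalently, the weighted area and volume functions become $A_\psi(r)=r^{n-1}$ and $V_\psi(r)=r^n/n$, so that
\[
\frac{A_\psi(r)}{V_\psi(r)}=\frac{n}{r}.
\]
First I would record that this ratio is decreasing, so the monotonicity hypothesis of Theorem~\ref{H-rot-existence} holds, and that the smoothness and evenness of $\psi$ at the pole are guaranteed by $\xi$ being odd with $\xi'(0)=1$ (this is precisely the compatibility remark that $d\psi/dr(0)=0$). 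With $H_0=-1/r_0$, the balancing condition \eqref{isop-ineq} reads $n|H_0|=n/r_0$, which is satisfied, so Theorem~\ref{H-rot-existence} produces a compact rotationally symmetric Killing graph over $B_{r_0}(o)$ with constant weighted mean curvature $H_\psi=H_0$.

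Next I would identify the profile function and, independently, verify the constant–curvature claim by a direct computation. For the identification, since $H_\psi\equiv H_0$ the flux is $I(r)=nH_0V_\psi(r)=H_0r^n$, and inserting this together with $e^{-4\psi}\xi^{2(n-1)}=A_\psi^2=r^{2(n-1)}$ into \eqref{u-solution} gives, after simplifying $\sqrt{\tau^{2(n-1)}-H_0^2\tau^{2n}}=\tau^{n-1}\sqrt{1-H_0^2\tau^2}$,
\[
u(r)=\int_{r_0}^r e^{\psi(\tau)}\frac{H_0\tau}{\sqrt{1-H_0^2\tau^2}}\,\dd\tau=\int_r^{r_0} e^{\psi(\tau)}\frac{-H_0\tau}{\sqrt{1-H_0^2\tau^2}}\,\dd\tau,
\]
which is exactly the expression in the statement. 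More transparently, I would check the claim by bypassing the full machinery: differentiating the displayed $u$ yields $u'(r)=e^{\psi}H_0r/\sqrt{1-H_0^2r^2}$, whence $W=e^{\psi}/\sqrt{1-H_0^2r^2}$ and the key cancellation $u'/W=H_0r$ (valid for \emph{any} weight, the specific $\psi$ entering only through the normalization of $A_\psi$). Plugging this into the rotationally symmetric form of \eqref{H-Hpsi}, namely $nH_\psi=A_\psi^{-1}\big(A_\psi\,u'/W\big)'=A_\psi^{-1}(H_0\,rA_\psi)'=H_0\big(1+rA_\psi'/A_\psi\big)$, and using $A_\psi=r^{n-1}$ so that $rA_\psi'/A_\psi=n-1$, gives $H_\psi\equiv H_0$.

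Finally, for the height bound I would estimate the explicit integral. Since $-H_0>0$ and $\tau\le r_0=1/|H_0|$, the integrand is nonnegative, so $u\ge0$; pulling the weight outside the integral,
\[
e^{-\psi(r)}u(r)=e^{-\psi(r)}\!\int_r^{r_0}\frac{-H_0\tau}{\sqrt{1-H_0^2\tau^2}}\,e^{\psi(\tau)}\,\dd\tau\le e^{\sup_\Omega\psi-\inf_\Omega\psi}\!\int_0^{r_0}\frac{-H_0\tau}{\sqrt{1-H_0^2\tau^2}}\,\dd\tau,
\]
and the remaining one–dimensional integral equals $-1/H_0=1/|H_0|$, since its antiderivative $\sqrt{1-H_0^2\tau^2}/H_0$ vanishes at $\tau=r_0$ (where $H_0^2r_0^2=1$) and takes the value $1/H_0$ at $\tau=0$. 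This is the asserted estimate, and it reduces to the Euclidean bound $1/|H_0|$ exactly when $\psi$ is constant. I expect the only delicate points to be the bookkeeping at the pole $r=0$ (smoothness of the graph across the axis of revolution, which follows as in Theorem~\ref{H-rot-existence} from $\psi'(0)=0$) and at the rim $r=r_0$, where $1-H_0^2r^2\to0$ forces a vertical tangent; both are already handled by the arc–length reparametrization used in the proof of Theorem~\ref{H-rot-existence}, so no genuinely new obstacle arises beyond the computation $u'/W=H_0r$ and the normalization $A_\psi=r^{n-1}$.
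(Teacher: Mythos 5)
Your proposal is correct and follows essentially the same route as the paper: the Corollary is obtained by specializing Theorem \ref{H-rot-existence} to the weight normalized so that $e^{-2\psi}\xi^{n-1}=r^{n-1}$, hence $A_\psi/V_\psi=n/r$, and then evaluating the resulting explicit integral, whose antiderivative $\sqrt{1-H_0^2\tau^2}/H_0$ gives the bound $1/|H_0|$ after extracting the oscillation of $\psi$. Your additional direct check that $u'/W=H_0r$ forces $H_\psi\equiv H_0$ is a nice independent confirmation but not a departure from the paper's argument.
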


\vspace{3mm}

The counterpart of Theorem \ref{H-rot-existence} and Corollary \ref{th_symm-example} in the case of constant mean curvature  can be obtained along the same lines by integrating both sides in (\ref{capillary-3}) instead of (\ref{H-Hpsi}). Denoting
\[
V(r) = \frac{1}{|\mathbb{S}^{n-1}|}{\rm vol} (B_r(o)) =\int_0^r e^{-\psi(\tau)}\xi^{n-1}(\tau)  \, {\rm d}\tau
\]
and
\[
A(r) = \frac{1}{|\mathbb{S}^{n-1}|}{\rm vol} (\partial B_r(o)) = e^{-\psi(r)}\xi^{n-1}(r)
\]
one obtains
\begin{theorem}
\label{Hw-rot-existence} Suppose that the ratio $\frac{A(r)}{V(r)}$ is non-increasing for $r\in (0,R)$. Let $H_0$ be a non-positive constant with
\begin{equation}
\label{isop-ineq}
n|H_{0}| =  \frac{A(r_0)}{V(r_0)} 
\end{equation}
for some $r_0\in (0,R)$.
Then there exists a compact rotationally symmetric Killing graph $\Sigma_0\subset P\times_{e^{-\psi}} \mathbb{R}$ of a radial function $u(r)$, $r\in [0,r_0]$, given by
\begin{equation}
\label{u-solution}
u(r) = \int^r_{r_0}  \frac{nH_{0}V(\tau)}{\sqrt{A^2(\tau)-n^2 H_{0}^2 V^2(\tau)}} \, {\rm d}\tau.
\end{equation}
with constant mean curvature $H=H_{0}$ and boundary $\partial B_{r_0}(o)\subset P$. The height function in this graph is bounded as follows
\begin{equation}
\label{u-estimate}
 e^{-\psi(r)}u(r) \le 
e^{\sup_{B_r(o)} \psi-\inf_{B_r(o)}\psi}
\int_{0}^{r_{0}}\frac{-nH_{0}}{\sqrt{\frac{A^2(\tau)}{V^2(\tau)}-n^2 H_{0}^2}}d\tau.
\end{equation}
\end{theorem}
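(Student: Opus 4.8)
The plan is to follow verbatim the scheme used for Theorem \ref{H-rot-existence}, replacing the weighted–mean–curvature flux identity \eqref{H-Hpsi} by the (unweighted) mean–curvature capillarity equation \eqref{capillary-3}. First I would exploit the rotational symmetry: for a radial $u=u(r)$ one has $\nabla^P u = u'(r)\nabla^P r$, so the left-hand side of \eqref{capillary-3} reads $\mathrm{div}^P_\psi\!\big(\tfrac{u'}{W}\nabla^P r\big)$. Integrating this identity over the geodesic ball $B_r(o)$ against the weighted measure $\mathrm{d}P_\psi=e^{-\psi}\mathrm{d}P$ and applying the weighted divergence theorem of the Introduction — using that $\partial B_r(o)$ has outward normal $\nabla^P r$ and weighted area $|\mathbb{S}^{n-1}|A(r)$ while $B_r(o)$ has weighted volume $|\mathbb{S}^{n-1}|V(r)$ — collapses everything to the first-order flux relation
\[
nH_0\, V(r) = A(r)\,\frac{u'(r)}{W}, \qquad W=\sqrt{e^{2\psi}+u'^2},
\]
the exact analogue of \eqref{H-rot-flux}. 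Solving this algebraically for $u'$ gives
\[
u'^2 = \frac{n^2 H_0^2\, V^2\, e^{2\psi}}{A^2 - n^2 H_0^2\, V^2},
\]
and, choosing the branch $u'\le 0$ (forced by $H_0\le 0$), this integrates to the radial primitive recorded in \eqref{u-solution}, the factor $e^{\psi}$ produced by $W$ being precisely what will generate the exponential prefactor in the height bound.

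Second, I would check that the integrand is well defined on $(0,r_0]$. This is where the hypothesis enters: since $A/V$ is non-increasing and $n|H_0|=A(r_0)/V(r_0)$, for every $r\le r_0$ one has $A(r)/V(r)\ge n|H_0|$, hence
\[
A^2 - n^2 H_0^2\, V^2 = V^2\Big(\frac{A^2}{V^2} - n^2 H_0^2\Big)\ge 0,
\]
with equality exactly at $r=r_0$. This is the step matching the chain $e^{-4\psi}\xi^{2(n-1)}-I^2\ge A_\psi^2-n^2H_\psi^2V_\psi^2\ge 0$ in the proof of Theorem \ref{H-rot-existence}, and it guarantees both that $u$ is real and that the flux ratio $u'/W=nH_0V/A$ stays in $[-1,0]$, reaching $-1$ precisely at $r_0$.

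Third comes the regularity discussion at the two endpoints, which I expect to be the only genuinely delicate part. At the pole $r=0$ I would reparametrize $\Sigma_0$ by arc length exactly as in Theorem \ref{H-rot-existence}, obtaining the analogous first-order system for $(r,s,\phi)$ with $u'/W=-\sin\phi$, and then invoke L'H\^opital together with $\xi(0)=0$, $\xi'(0)=1$, $\psi'(0)=0$ to show $e^{-\psi}\dot s\to 0$, hence $\dot s\to 0$, so that $\Sigma_0$ closes up smoothly across the axis of revolution. At the boundary $r=r_0$, the vanishing of $A^2-n^2H_0^2V^2$ forces $u'/W\to -1$, i.e. $\dot r=\cos\phi\to 0$: the graph meets the cylinder over $\partial B_{r_0}(o)$ with a vertical tangent, and the arc-length system passes smoothly through this point where the graph ODE itself is singular. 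Running the flux relation backwards (differentiating in $r$) then recovers \eqref{capillary-3} with constant right-hand side, confirming that the hypersurface so constructed has constant mean curvature $H\equiv H_0$ and boundary $\partial B_{r_0}(o)$, as claimed.

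Finally, the height estimate is a direct computation. Writing $\sqrt{A^2-n^2H_0^2V^2}=V\sqrt{A^2/V^2-n^2H_0^2}$ cancels the factor $V$ in the numerator, so that for $r<r_0$
\[
e^{-\psi(r)}u(r)=e^{-\psi(r)}\int_r^{r_0}\frac{-nH_0\, e^{\psi(\tau)}}{\sqrt{A^2(\tau)/V^2(\tau)-n^2H_0^2}}\,\mathrm{d}\tau,
\]
which is non-negative since $H_0<0$. Bounding the factor $e^{\psi(\tau)-\psi(r)}$ by $e^{\sup\psi-\inf\psi}$ and enlarging the interval of integration from $[r,r_0]$ to $[0,r_0]$ yields exactly \eqref{u-estimate}. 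The whole argument is word-for-word parallel to the constant-weighted-mean-curvature case of Theorem \ref{H-rot-existence}, the only substantive changes being the systematic replacement of $e^{-2\psi}$ by $e^{-\psi}$ in the definitions of $A$ and $V$ and the use of \eqref{capillary-3} in place of \eqref{H-Hpsi}.
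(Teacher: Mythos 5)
Your proof is correct and is exactly the argument the paper intends: the paper gives no separate proof of Theorem \ref{Hw-rot-existence}, remarking only that it is obtained ``along the same lines'' as Theorem \ref{H-rot-existence} by integrating \eqref{capillary-3} instead of \eqref{H-Hpsi}, which is precisely what you carry out. One remark: your derivation (correctly) produces a factor $e^{\psi(\tau)}$ in the integrand defining $u(r)$, which is missing from the formula printed in the statement of Theorem \ref{Hw-rot-existence} but is exactly what is needed for the stated height bound and what matches the parallel formula in Theorem \ref{H-rot-existence}; this appears to be a typo in the paper that your computation silently corrects.
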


\vspace{3mm}

The analog of Corollary \ref{th_symm-example} in the case of constant mean curvature is 

\begin{corollary}
\label{th_symm-example-w}Let $P=[0,R)\times_{\xi}\mathbb{S}^{n-1}$ be an
$n$-dimensional model manifold with warping function $\xi$ and let
$\psi:\left[0,R\right)  \rightarrow\R _{>0}$ be the smooth, even
function defined by 
\[
\psi(r)  =c\cdot
\begin{cases}
 (n-1)\log\frac{\xi (r)}{r} & r\neq0\\
1 & r=0,
\end{cases}
\]
where $c>0$ is a given constant. Fix $0<r_{0}<R$, let $H_{0}=-1/r_{0}$ and
define 
\[
u(r) =\int^{r}_{r_0}\frac{H_{0}\tau}{\sqrt{1-H_{0}^2
\tau^{2}}}d\tau.
\]
Then, in the ambient manifold $M=P\times_{e^{-\psi}}\R $, the Killing graph of 
$u$ over $\Omega=B_{r_{0}}^{P}(
o) \subset P$ has  constant mean curvature $H_{0}$ with respect to the upward pointing Gauss map.
Moreover, 
\[
0\leq 
e^{-\psi(r)}
u(r) \leq
\frac{\max_{\lbrack0,r_{0}]}e^{-\psi}}{\min_{[0,r_{0}]}e^{-\psi}}\cdot
\frac{1}{|H_{0}|}
=e^{\sup_\Omega \psi-\inf_\Omega\psi}\frac{1}{|H_0|}.
\]
\end{corollary}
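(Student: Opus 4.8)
The plan is to obtain Corollary~\ref{th_symm-example-w} as the direct specialization of Theorem~\ref{Hw-rot-existence}, in exact parallel with the way Corollary~\ref{th_symm-example} follows from Theorem~\ref{H-rot-existence}. The entire role of the prescribed weight $\psi$ is to make the model isoperimetrically Euclidean, i.e. to force $A(r)/V(r)=n/r$; once this is in place, every integral appearing in Theorem~\ref{Hw-rot-existence} collapses to the classical spherical computation, which is what produces the sharp bound $1/|H_0|$ up to the oscillation factor of the weight.

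First I would verify the defining feature of $\psi$, namely that the chosen weight satisfies $e^{-\psi(r)}\xi^{n-1}(r)=r^{n-1}$; equivalently $e^{-\psi(r)}=(r/\xi(r))^{n-1}$. This gives
\[
A(r)=e^{-\psi(r)}\xi^{n-1}(r)=r^{n-1},\qquad V(r)=\int_0^r A(\tau)\,\dd\tau=\frac{r^n}{n},
\]
so that $A(r)/V(r)=n/r$, which is manifestly non-increasing on $(0,R)$, exactly as in Euclidean space. I would also record that $\psi$ extends smoothly and evenly across the pole with $\psi'(0)=0$: this is the compatibility condition $\tfrac{\dd}{\dd r}e^{-\psi}(0)=0$ flagged before Corollary~\ref{th_symm-example}, and it follows from the model conditions $\xi^{(2k)}(0)=0$, $\xi'(0)=1$, which force $\xi(r)/r=1+O(r^2)$ and hence $\log(\xi(r)/r)=O(r^2)$.

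Next I would check the isoperimetric normalization required by Theorem~\ref{Hw-rot-existence}. Since $H_0=-1/r_0<0$, we have $n|H_0|=n/r_0=A(r_0)/V(r_0)$, so the hypotheses are met and the theorem produces a compact rotationally symmetric Killing graph over $\Omega=B_{r_0}^P(o)$, with boundary $\partial B_{r_0}(o)$, of constant mean curvature $H=H_0$ with respect to the upward pointing Gauss map. Substituting $A(\tau)=\tau^{n-1}$ and $V(\tau)=\tau^n/n$ into the formula for $u$ of Theorem~\ref{Hw-rot-existence} yields
\[
u(r)=\int_{r_0}^r\frac{nH_0\,V(\tau)}{\sqrt{A^2(\tau)-n^2H_0^2V^2(\tau)}}\,\dd\tau=\int_{r_0}^r\frac{H_0\,\tau}{\sqrt{1-H_0^2\tau^2}}\,\dd\tau,
\]
which is precisely the function in the statement. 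Since $H_0<0$ we have $u'(r)=H_0\tau/\sqrt{1-H_0^2\tau^2}\le 0$ and $u(r_0)=0$, so $u\ge 0$ on $[0,r_0]$; together with $e^{-\psi}>0$ this gives the lower bound $e^{-\psi}u\ge 0$.

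Finally, for the upper bound I would feed $A/V=n/\tau$ into the height estimate of Theorem~\ref{Hw-rot-existence}. The radicand becomes $A^2/V^2-n^2H_0^2=(n^2/\tau^2)(1-H_0^2\tau^2)$, so the integrand reduces to $-H_0\tau/\sqrt{1-H_0^2\tau^2}$ and the substitution $w=\tau/r_0$ evaluates
\[
\int_0^{r_0}\frac{-nH_0}{\sqrt{A^2/V^2-n^2H_0^2}}\,\dd\tau=\int_0^{r_0}\frac{-H_0\tau}{\sqrt{1-H_0^2\tau^2}}\,\dd\tau=r_0=\frac{1}{|H_0|}.
\]
Combining this with the prefactor $e^{\sup_{B_r}\psi-\inf_{B_r}\psi}$ and taking the supremum over $\Omega=B_{r_0}(o)$ gives $e^{-\psi}u\le e^{\sup_\Omega\psi-\inf_\Omega\psi}/|H_0|=(\max e^{-\psi}/\min e^{-\psi})\cdot 1/|H_0|$, as claimed. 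I do not anticipate any serious obstacle here: the only substantive point is the identity $A/V=n/r$, which is exactly what decouples the estimate from the particular warping function $\xi$ and reproduces the round-sphere bound, while everything else is the routine simplification of the two integrals above and the monotonicity and normalization checks demanded by Theorem~\ref{Hw-rot-existence}. The one point deserving genuine care is the smoothness of $\psi$, and hence of the resulting graph, at the pole, which must be traced back to the oddness of $\xi$ at $r=0$.
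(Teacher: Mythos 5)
Your route is exactly the one the paper intends: Corollary \ref{th_symm-example-w} is stated there with no separate proof, as the direct specialization of Theorem \ref{Hw-rot-existence} to a weight making the model isoperimetrically Euclidean, mirroring how Corollary \ref{th_symm-example} is obtained from Theorem \ref{H-rot-existence}. Your verification of the monotonicity and normalization hypotheses, the smoothness of $\psi$ at the pole via $\xi(r)/r=1+O(r^2)$, the evaluation of the two integrals, and the identification $e^{\sup_\Omega\psi-\inf_\Omega\psi}=\max_{[0,r_0]}e^{-\psi}/\min_{[0,r_0]}e^{-\psi}$ are all correct.

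Two points, however, must be confronted rather than passed over. First, your opening step fails for general $c$: the identity $e^{-\psi(r)}\xi^{n-1}(r)=r^{n-1}$ holds only when $c=1$, since the stated weight gives $e^{-\psi}=(r/\xi)^{c(n-1)}$ and hence $A(r)=r^{c(n-1)}\xi(r)^{(1-c)(n-1)}$. For $c\neq1$ neither $A/V=n/r$, nor the monotonicity of $A/V$, nor the normalization $n|H_0|=A(r_0)/V(r_0)$ is available, and the whole specialization collapses. This defect is inherited from the paper (the discussion preceding Corollary \ref{th_symm-example} likewise pins the choice down to $c=1$), but since the statement allows arbitrary $c>0$ you must either restrict to $c=1$ or acknowledge that no argument for general $c$ is supplied. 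Second, integrating the radial capillarity equation \eqref{capillary-3} gives $u'/W=nH_0V/A$ with $W=\sqrt{e^{2\psi}+u'^2}$, hence $u'=e^{\psi}\,nH_0V/\sqrt{A^2-n^2H_0^2V^2}$: the representation formula should carry the factor $e^{\psi(\tau)}$, exactly as the corresponding formula in Theorem \ref{H-rot-existence} and in Corollary \ref{th_symm-example} does. Your substitution faithfully reproduces the factorless formula printed in Theorem \ref{Hw-rot-existence}, but without that factor the stated $u$ is not the constant mean curvature graph unless $\psi$ is constant; a careful proof should restore the factor, after which the height bound follows just as in Corollary \ref{th_symm-example}, the extra $e^{\psi}$ being absorbed into the oscillation prefactor. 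Neither issue originates with you, but a complete proof of the statement as printed has to address both.
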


\begin{remark}
\rm{Up to the factor $2$ in the exponential, this is  the estimate obtained in Theorem \ref{th_fheightestimate}
above.  We suspect that the high rotational symmetry considered in the example prevents to achieve the maximum height predicted by the theorem. On the other hand we conjecture that the rotationally symmetric graphs can be used as barriers to obtain sharp estimates in the case of general warped spaces $P\times_{e^{-\psi}}\mathbb{R}$  in the case when the radial sectional curvatures of $P$ are bounded from above by some radial function.}
\end{remark}

\end{document}